\newtheorem{theorem}{Theorem}[section]
\newtheorem{lemma}[theorem]{Lemma}
\newtheorem{corollary}[theorem]{Corollary}
\newtheorem{proposition}[theorem]{Proposition}
\newcommand{\cE}{{\mathcal E}}
\newcommand{\cM}{{\mathcal M}}
\newcommand{\cN}{{\mathcal N}}
\newcommand{\cP}{{\mathcal P}}
\newcommand{\cS}{{\mathcal S}}
\newcommand{\cT}{{\mathcal T}}
\title[]{When is a set of phylogenetic trees displayed by a normal network?}
\author{Magnus Bordewich, Simone Linz, and Charles Semple} 
\thanks{The second and third authors thank the New Zealand Marsden Fund for their financial support.}
\address{Department of Computer Science, Durham University, United Kingdom}
\email{m.j.r.bordewich@durham.ac.uk}
\address{School of Computer Science, University of Auckland, Auckland, New Zealand}
\email{s.linz@auckland.ac.nz}
\address{School of Mathematics and Statistics, University of Canterbury, Christchurch, New Zealand}
\email{charles.semple@canterbury.ac.nz}
\keywords{displaying, display set, hybridisation number, normal networks, phylogenetic trees and networks}
\date{\today}
\begin{document}

\begin{abstract}
A normal network is uniquely determined by the set of phylogenetic trees that it displays. Given a set $\cP$ of rooted binary phylogenetic trees, this paper presents a polynomial-time algorithm that reconstructs the unique binary normal network whose set of displayed binary trees is $\cP$, if such a network exists. Additionally, we show that any two rooted phylogenetic trees can be displayed by a normal network and show that this result does not extend to more than two trees. This is in contrast to tree-child networks where it has been previously shown that any collection of rooted phylogenetic trees can be displayed by a tree-child network. Lastly, we introduce a type of cherry-picking sequence that characterises when a collection $\cP$ of rooted phylogenetic trees can be displayed by a normal network and, further, characterise the minimum number of reticulations needed over all normal networks that display $\cP$. We then exploit these sequences to show that, for all $n\ge 3$, there exist two rooted binary phylogenetic trees on $n$ leaves that can be displayed by a tree-child network with a single reticulation, but cannot be displayed by a normal network with less than $n-2$ reticulations.
\end{abstract}

\maketitle

\section{Introduction}

The task of accurately representing the evolutionary history of a set of species can now be approached by using leaf-labelled graphs of different complexities~\cite{blais21}. Traditionally, phylogenetic trees have been inferred to represent speciation events that have given rise to the present-day diversity of species. However, the tree model is, in many cases, too simplistic and does not capture all evolutionary signals in the data. For example, reticulation events such as hybridisation and lateral gene transfer cause patterns of relationships that cannot be represented by a single phylogenetic tree. On the other hand, the reconstruction of more complex leaf-labelled graphs that can capture reticulation events may result in an inferred phylogenetic network that represents information that only has little support from the data. This is, for instance, the case when a phylogenetic network is reconstructed from a set $\cP$ of phylogenetic trees such that the resulting phylogenetic network $\cN$ displays each tree in $\cP$. Typically, $\cN$ also displays additional trees that are not in $\cP$.  Recently, we have characterised those sets of rooted binary phylogenetic trees that can be displayed by a rooted binary level-$1$ network without inferring any additional trees~\cite{doecker24}. Roughly speaking, a level-$1$ network is a rooted phylogenetic network whose underlying cycles do not intersect. As such, level-$1$ networks are a relatively small class of phylogenetic networks that are suited to represent evolutionary relationships with sparse reticulation events.

In this paper, we focus on the reconstruction of normal networks from a collection of rooted phylogenetic trees. Normal networks, which were first introduced by Willson~\cite{willson10}, are phylogenetic networks that have no shortcuts and each non-leaf vertex has a child with in-degree one (formal definitions are given in Section~\ref{sec:prelim}). The class of normal networks has recently been described as a class with prospect for practical use due to their proven mathematical properties~\cite{francis}. Furthermore, it was shown by Francis et al.~\cite{francis21} that each rooted phylogenetic network that is not normal can be transformed into a unique canonical network that is normal by omitting vertices and edges that are, in some sense, redundant.  Referring to the set of all rooted binary phylogenetic $X$-trees that are displayed by a rooted binary phylogenetic network $\cN$ on $X$ as the {\it display set} of $\cN$, the first part of this paper establishes a polynomial-time algorithm that reconstructs a binary normal network whose display set is equal to a given set $\cP$ of rooted binary phylogenetic trees if such a network exists. Importantly, if the algorithm returns a binary normal network, then it is the unique such network because no two binary  normal networks have the same display set~\cite{willson11}. This last result is known to hold for regular networks, which are a superclass of normal networks. Willson~\cite{willson11}  established a top-down algorithm, based on clusters, that always reconstructs a regular network from a collection $\cP$ of rooted phylogenetic trees. In particular, if $\cP$ is the display set of a binary regular network $\cN$, then the algorithm returns $\cN$. Our approach is a bottom-up algorithm based on cherries and reticulated cherries.

As each binary normal network with $k$ vertices of in-degree two has a display set of size $2^k$~\cite{iersel10,willson12}, a simple necessary but not sufficient condition for a collection $\cP$ of rooted binary phylogenetic trees to be the display set of a binary normal network is that $|\cP|=2^{k'}$ for some non-negative integer~$k'$. Hence, most collections of rooted binary phylogenetic trees are not the display set of a binary normal network. In the second part of this paper, we therefore turn to the question of which collections of (arbitrary) rooted phylogenetic trees can be displayed by a (arbitrary) normal network. We show that two rooted  phylogenetic trees can always be displayed by a normal phylogenetic network and, moreover, that the result does not extend to more than two trees. This latter result is in contrast to the class of tree-child networks for which it has been shown that any collection of rooted phylogenetic trees can be displayed by such a network~\cite{linz19}. Like normal networks, tree-child networks~\cite{cardona09} satisfy the structural constraint that each non-leaf vertex has a child of in-degree one but, unlike normal networks, tree-child networks may have shortcuts. 

We then turn to cherry-picking sequences, which are sequences of pairs of leaves of phylogenetic trees. These sequences were introduced by Humphries et al.\ in 2013~\cite{humphries13b} and used to establish the following equivalence: A set $\cP$ of rooted phylogenetic trees  can be displayed by a  temporal tree-child network precisely if  $\cP$ has a cherry-picking sequence. Moreover, by associating a weight to each cherry-picking sequence, these sequences were  also used to characterise the minimum number of reticulations needed over all such networks that display $\cP$.~This optimisation problem was introduced by Baroni et al.~\cite{baroni05} and is know as {\sc Minimum Hybridisation}. Since the publication of~\cite{humphries13b}, cherry-picking sequences have been generalised to larger classes of rooted phylogenetic networks (e.g.,~\cite{janssen21,linz19}). Subsequently, this work has resulted in the development of software to reconstruct phylogenetic networks from collections of trees in practice~\cite{bernardi23,iersel22}. In addition, cherry-picking sequences have recently been used in the context of computing distances between phylogenetic networks~\cite{landry23, landry}. In the present paper, we introduce a new type of cherry-picking sequences to decide if a given collection $\cP$ of phylogenetic trees can be displayed by a normal network. Intuitively, this new type captures the constraint that normal networks do not have any shortcuts. We then exploit the new type of cherry-picking sequences and show that the solution to the {\sc Minimum Hybridisation} problem for $\cP$ over all normal networks is given by the minimum weight of such a sequence for $\cP$. Lastly, we show that, for all $n\geq 3$, there exists a pair of rooted binary phylogenetic trees $\cT$ and $\cT'$ on $n$ leaves such that there exists a binary tree-child network that displays $\cT$ and $\cT'$ with a single reticulation, whereas any binary normal network that displays $\cT$ and $\cT'$ has $n-2$ reticulations, which is the maximum number of reticulations a binary normal network on $n$ leaves can have~\cite{bickner12,mcdiarmid15}.  

The remainder of the paper is organised as follows. Section~\ref{sec:prelim} provides mathematical definitions and concepts that are used throughout the following sections. In Section~\ref{sec:normal-compatible}, we present a polynomial-time algorithm to decide if a given set of rooted binary phylogenetic $X$-trees is the display set of a binary normal network and, if so, to reconstruct such a network. We then show that any two rooted phylogenetic $X$-trees can always be displayed by a normal network in Section~\ref{sec:two-normal} before characterising arbitrarily large collections of rooted phylogenetic $X$-trees that can be displayed by a normal network using cherry-picking sequences in Section~\ref{sec:multi-normal}. In the latter section, we also show how these sequences can be used to solve {\sc Minimum Hybridisation}, provided that the initial collection of rooted phylogenetic $X$-trees can be displayed by a normal network. In Section~\ref{sec:disparity}, we show that, for all $n\ge 3$, there exists a pair of rooted binary phylogenetic $X$-trees on $n$ leaves that can be displayed by a binary tree-child network with a single reticulation and for which any binary normal network that displays the same two trees requires $n-2$ reticulations.

\section{Preliminaries}\label{sec:prelim}

In this section, we introduce notation and terminology that is needed for the upcoming sections. Throughout the paper, $X$ denotes a non-empty finite set. 

\noindent {\bf Phylogenetic networks.} A {\it rooted phylogenetic network $\cN$ on $X$} is a rooted acyclic digraph with no parallel edges or loops that satisfies the following properties:
\begin{enumerate}[(i)]
\item the (unique) root $\rho$ has out-degree two,
\item the set $X$ is the set of vertices of out-degree zero, each of which has in-degree one, and
\item all other vertices either have in-degree one and out-degree two, or in-degree at least two and out-degree one.
\end{enumerate}
For technical reasons, if $|X|=1$, we additionally allow $\cN$ to consist of the single vertex in $X$. The set $X$ is the {\it leaf set} of $\cN$ and the vertices in $X$ are called {\it leaves}. Furthermore, the vertices of in-degree at most one and out-degree two are {\it tree vertices}, while the vertices of in-degree at least two and out-degree one are {\it reticulations}. An edge directed into a reticulation is called a {\it reticulation edge} while each non-reticulation edge is called a {\it tree edge}. We say that $\cN$ is {\it binary} if each reticulation has in-degree exactly two. 

Let $\cN$ be a rooted phylogenetic network on $X$, and let $u$ and $v$ be two vertices of $\cN$. We say that $v$ (resp. $u$) is a {\it descendant} (resp. an {\it ancestor}) of $u$ (resp. $v$) if there is a directed path of length at least zero from $u$ to $v$ in $\cN$.  In particular, if  $(u,v)$ is an edge in $\cN$, then  $u$ is a {\it parent} of $v$ and, equivalently, $v$ is a {\it child} of $u$. Note that we consider a vertex to be an ancestor and a descendant of itself. For an element $x$ in $X$, we also write $p_x$ to denote the (unique) parent of $x$ in $\cN$.  Moreover, the subset of $X$ that precisely contains the  leaves that are descendants of $u$ in $\cN$, denoted by  $C_\cN(u)$ or, simply, $C(u)$ if there is no ambiguity, is called the {\it cluster} of $u$. Hence, for an element $x$ in $X$, we have $C(x)=\{x\}$. Now, let $\{x,y\}$ be a $2$-element subset of $X$. We call $\{x,y\}$  a {\it cherry} of $\cN$ if $p_x=p_y$ and a {\it reticulated cherry} of $\cN$ with {\it reticulation leaf} $x$ if $(p_y,p_x)$ is a reticulation edge in $\cN$. In this paper, we typically distinguish the leaves in a cherry and a reticulated cherry, in which case we write $\{x, y\}$ as the ordered pair $(x, y)$ depending on the roles of $x$ and $y$.

\noindent {\bf Phylogenetic trees.}  A {\it rooted phylogenetic X-tree} $\cT$ is a rooted tree that has no vertex of degree two except possibly the root which has degree at least two. As for rooted phylogenetic networks, $X$ is the {\it leaf set} of $\cT$ and, if $|X| =1$, then $\cT$ consists of the single vertex in $X$. Furthermore, $\cT$ is {\it binary} if $|X| =1$ or, the root  has degree two and all other internal vertices have degree three. For an element $x$ in $X$, we denote by $\cT\backslash x$ the operation of deleting $x$ and its incident edge and, if the parent of $x$ in $\cT$ has out-degree two, suppressing the resulting degree-two vertex. Note that if the parent of $x$ is the root $\rho$ of $\cT$ and $\rho$ has out-degree two, then $\cT\backslash x$ denotes the operation of deleting $x$ and its incident edge, and then deleting $\rho$ and its incident edge. Observe that $\cT\backslash x$ is a phylogenetic $(X-\{x\})$-tree. 

Now, let $\cT$ and $\cT'$ be two rooted phylogenetic $X$-trees. We say that $\cT'$ is a {\it refinement} of $\cT$ if $\cT$ can be obtained from $\cT'$ by contracting a possibly empty set of internal edges in $\cT'$. In addition, $\cT'$ is a {\it binary refinement} of $\cT$ if $\cT'$ is binary.  

\noindent {\bf Naming convention.} Since all phylogenetic trees and networks in this paper are rooted, we omit the adjective {\it rooted} from now on. 

\noindent {\bf Caterpillars and subtrees.}  Let $\cT$ be a  phylogenetic $X$-tree with $|X|\geq 3$. We call $\cT$ a {\em caterpillar} if we can order $X$, say $x_1, x_2, \ldots, x_n$, so that $p_{x_1}=p_{x_2}$ and, for all $i\in \{2, 3, \ldots, n-1\}$, we have that $(p_{x_{i+1}}, p_{x_i})$ is an edge in $\cT$. We denote such a caterpillar $\cT$ by $(x_1, x_2, \ldots, x_n)$ or, equivalently, $(x_2, x_1, x_3, \ldots, x_n)$. Two caterpillars are shown in Figure~\ref{fig:caterpillars}. Furthermore, we call a caterpillar on exactly three leaves a {\em triple}. Again, let $\cT$ be a phylogenetic $X$-tree, and let $Y$ be a subset of $X$. The {\it restriction of $\cT$ to $Y$}, denoted by $\cT|Y$, is the  phylogenetic tree obtained from the minimal rooted subtree of $\cT$ that connects all elements in $Y$ by suppressing each vertex  with in-degree one and out-degree one

\begin{figure}[t]
\center
\scalebox{0.92}{\input{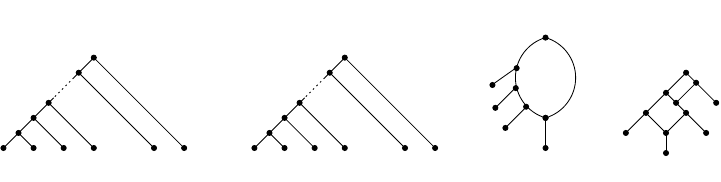_t}}
\caption{The two caterpillars $\cT=(\ell_1,\ell_2,\ell_3,\ldots,\ell_{n-1},\ell_n)$ and $\cT'=(\ell_2,\ell_3,\ell_4,\ldots,\ell_n,\ell_1)$ on $n$ leaves with $n\geq 3$, a tree-child network $\cN$, and a normal network $\cN'$. Each of $\cN$ and $\cN'$ displays $\cT$ and $\cT'$ for $n=4$.}
\label{fig:caterpillars}
\end{figure}

\noindent {\bf Displaying.} Let $\cT$ be a  phylogenetic $X'$-tree, and let $\cN$ be a phylogenetic network on $X$ with $X'\subseteq X$. We say that $\cN$ {\it displays} $\cT$ if, up to suppressing vertices with in-degree one and out-degree one, there exists a binary refinement of $\cT$ that can be obtained from $\cN$ by deleting edges, leaves not in $X'$, and any resulting vertices of out-degree zero, in which case the resulting acyclic digraph is referred to as an {\it embedding} of $\cT$ in $\cN$. More generally, if $\cP$ is a collection of phylogenetic $X'$-trees, then $\cN$ {\it displays} $\cP$ if each tree in $\cP$ is displayed by $\cN$. Moreover, the set of all binary phylogenetic $X$-trees displayed by $\cN$ is referred to as the {\it display set} of $\cN$. Let $\cE$ be an embedding of a phylogenetic $X$-tree in $\cN$. For the purpose of the upcoming sections, we view $\cE$ as a subset of the edge set of $\cN$. Furthermore, for an edge $e$ of $\cN$, we say that $\cE$ {\it uses} $e$ if $e$ is an edge of $\cE$.

\noindent {\bf Tree-child and normal networks.}  Let $\cN$ be a  phylogenetic network on $X$. A reticulation edge $(u,v)$ of $\cN$ is a {\it shortcut} if $\cN$ has a directed path from $u$ to $v$ that avoids $(u, v)$. If a reticulation edge $e=(u,v)$ is a shortcut of $\cN$, then we define the {\it length} of $e$ to be the number of vertices in the union of the vertex sets of all directed paths from $u$ to any parent of $v$.

We say that a phylogenetic network $\cN$ is  {\it tree-child } if each non-leaf vertex in $\cN$ has a child that is a leaf or a tree vertex. A vertex $v$ of $\cN$ is said to have a tree path if there is a directed path from $v$ to some leaf $x$ in $\cN$ such that every vertex in the path, except possibly $v$ itself, is a tree vertex or a leaf. The following equivalence is well known and freely used in this paper.

\begin{lemma}\label{l:tree-path}
Let $\cN$ be a  phylogenetic network on $X$. Then $\cN$ is tree-child if and only if each vertex of $\cN$ has a tree path.
\end{lemma}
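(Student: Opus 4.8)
The plan is to prove the two implications of the equivalence separately, in both cases by tracing a directed path downwards from a vertex and reading off the relevant structural property child by child.

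For the direction that each vertex having a tree path implies $\cN$ is tree-child, I would take an arbitrary non-leaf vertex $v$ and invoke its tree path $v=v_0,v_1,\ldots,v_k$, where $v_k$ is a leaf and every $v_i$ with $i\geq 1$ is a leaf or a tree vertex. Since $v$ is not a leaf while $v_k$ is, the path is non-trivial, so $k\geq 1$ and $v_1$ is a child of $v$. As $v_1\neq v$, it falls under the ``except possibly $v$ itself'' clause and is therefore a leaf or a tree vertex, which is exactly what the tree-child condition requires of $v$. As $v$ was an arbitrary non-leaf vertex, $\cN$ is tree-child.

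For the converse, assuming $\cN$ is tree-child, I would fix a vertex $v$ and build a tree path greedily: set $v_0=v$, and while the current vertex $v_i$ is not a leaf, use the tree-child property to choose a child $v_{i+1}$ of $v_i$ that is a leaf or a tree vertex. Every vertex produced after $v_0$ is, by construction, a leaf or a tree vertex, so any path this procedure returns is a tree path. The one point that needs care---and the only real content of the argument---is termination: because $\cN$ is a finite acyclic digraph, the directed path cannot revisit a vertex and hence cannot be extended indefinitely, so the process must halt; it halts precisely when the current vertex is a leaf, yielding the desired tree path from $v$. I expect this termination step, resting on finiteness and acyclicity, to be the only place where more than a one-line justification is warranted, and even there the argument is short.
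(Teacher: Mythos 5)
Your proof is correct. The paper states Lemma~\ref{l:tree-path} as well known and provides no proof of its own; your two-directional argument---reading the tree-child condition for $v$ off the first edge of its tree path, and conversely greedily extending a path through children that are leaves or tree vertices, with termination guaranteed by finiteness and acyclicity---is exactly the standard folklore argument the paper implicitly relies on, and both directions, including the termination step you flag, are handled correctly.
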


\noindent Furthermore, we say that $\cN$ is {\it normal} if it is tree-child and has no shortcut. If $\cN$ is a binary normal network with $k$ reticulations, then $\cN$ displays exactly $2^k$ binary phylogenetic $X$-trees~\cite{iersel10,willson12}. 

Now let $\cP$ be a collection of  phylogenetic $X$-trees. We say that $\cP$ is {\em normal compatible} if there exists a normal network on $X$ that displays $\cP$. To illustrate, Figure~\ref{fig:caterpillars} shows a tree-child network $\cN$ that contains a shortcut and a normal network $\cN'$ that both  display $\cP=\{(\ell_1,\ell_2,\ell_3,\ell_4),(\ell_2,\ell_3,\ell_4,\ell_1)\}$. Hence, $\cP$ is normal compatible. 

\noindent{\bf Hybridisation number.}
With the definitions of a tree-child and normal network in hand, we end this section by introducing three quantities that will play an important role in solving {\sc Minimum Hybridisation} over all normal networks. Let $V$ be the vertex set, and let $\rho$ be the root of a phylogenetic network $\cN$ on $X$. The {\it hybridisation number} of $\cN$, denoted $h(\cN)$, is the value
$$h(\cN)=\sum_{v\in V-\{\rho\}} \left(d^-(v)-1\right),$$
where $d^-(v)$ denotes the in-degree of $v$. Furthermore, for a set $\cP$ of phylogenetic $X$-trees, we set
$$h_{\rm n}(\cP)=\min\{h(\cN):~\mbox{$\cN$ is a normal network on $X$ that displays $\cP$}\} $$
and
$$h_{\rm tc}(\cP)=\min\{h(\cN):~\mbox{$\cN$ is a tree-child network on $X$ that displays $\cP$}\}.$$

\section{Sets of binary phylogenetic trees that are the display set of a binary normal network}\label{sec:normal-compatible}

In this section, we establish a simple recursive algorithm for deciding if a collection $\cP$ of binary phylogenetic trees is the display set of a binary normal network $\cN$. We say that $\cP$ is {\em tightly normal compatible} if there exists a binary normal network on $X$ whose display set is exactly $\cP$. If $\cP$ is tightly normal compatible, then $|\cP|=2^k$ for some non-negative integer $k$, and the binary normal network whose display set is $\cP$ is unique and has exactly $k$ reticulations~\cite{willson11}.

Called {\sc Display Set Compatibility}, we begin with a formal description of the algorithm.

\noindent {\sc Display Set Compatibility} \\
\noindent {\bf Input:} A collection of binary phylogenetic $X$-trees. \\
\noindent {\bf Output:} A binary normal network on $X$ whose display set is $\cP$ or the statement {\em $\cP$ is not tightly normal compatible}.

\begin{enumerate}[{\bf Step 1.}]
\item If there is no non-negative integer $k$ such that $|\cP|=2^k$, then return {\em $\cP$ is not tightly normal compatible}.

\item If $|X|=1$, then return the binary normal network consisting of a single vertex labelled with the element in $X$.

\item If $|X|=2$, then return the binary normal network consisting of a root vertex and two leaves bijectively labelled with the elements in $X$.

\item \label{all-step} If there is subset $\{a, b\}$ of $X$ such that $\{a, b\}$ is a cherry of every tree in $\cP$, then
\begin{enumerate}[{\bf (a)}]
\item Delete $b$ from each tree in $\cP$, and set $\cP'$ to be the resulting collection of binary phylogenetic $(X-\{b\})$-trees.

\item Apply {\sc Display Set Compatibility} to $\cP'$.
\begin{enumerate}[{\bf (i)}]
\item If a binary normal network $\cN'$ on $X-\{b\}$ is returned, construct a binary normal network $\cN$ on $X$ by subdividing the edge directed into $a$ with a new vertex $p_a$ and adjoining a new leaf $b$ to $p_a$ via a new edge $(p_a, b)$. Return $\cN$.

\item Else, return {\em $\cP$ is not tightly normal compatible}.
\end{enumerate}
\end{enumerate}

\item \label{half-step} If there is a subset $\{a, b\}$ of $X$ such that $\{a, b\}$ is a cherry of exactly half of the trees in $\cP$ and, for some $x\in \{a, b\}$, there is a bijection $\varphi$ from the subset $\cP_1$ of trees in $\cP$ with $\{a, b\}$ as a cherry to $\cP-\cP_1$ such that, for all $\cT_1$ in $\cP_1$, the pair $\{a,b\}$ is a reticulated cherry with reticulation leaf $x$ of the unique binary normal network whose display set is $\{\cT_1, \varphi(\cT_1)\}$, then
\begin{enumerate}[{\bf (a)}]
\item Apply {\sc Display Set Compatibility} to $\cP-\cP_1$.
\begin{enumerate}[{\bf (i)}]
\item If a binary normal network $\cN'$ on $X$ is returned, construct a binary phylogenetic network $\cN$ on $X$ by subdividing the edges directed into $x$ and $y$ with new vertices $p_x$ and $p_y$, respectively, and adding the new edge $(p_y,p_x)$, where $y\in\{a,b\}-\{x\}$.

\item If $\cN$ is normal, return $\cN$.

\item Else, return {\em $\cP$ is not tightly normal compatible}.
\end{enumerate}
\end{enumerate}

\item Else, return {\em $\cP$ is not tightly normal compatible}.
\end{enumerate}

\noindent Before continuing, we make some remarks concerning {\sc Display Set Compatibility}. If $\cN$ is a binary normal network, then either $\cN$ has a cherry or a reticulated cherry~\cite{bor16}. Suppose that $\cP$ is the display set of $\cN$.  If $\cN$ has a cherry $\{a, b\}$, then every tree in $\cP$ has $\{a, b\}$ as a cherry. Step~\ref{all-step} is checking for this property. Analogously, if $\cN$ has a reticulated cherry $\{a, b\}$ with reticulation leaf $b$, then $\cP$ satisfies the conditions in Step~\ref{half-step}. The algorithm {\sc Display Set Compatibility} recursively checks for these conditions in Steps~\ref{all-step} and~\ref{half-step} and, if satisfied, reduces the size of $|X|$ or the size of $|\cP|$. 

To illustrate the working of the algorithms, applying {\sc Display Set Compatibility} to the set $\cP$ of eight binary phylogenetic $X$-trees with $X=\{1,2,3,4,5\}$ as shown in Figure~\ref{fig:tightly-normal-compatible}, the algorithm starts by checking if any 2-element subset of $X$ is a cherry in all trees of $\cP$ in Step~\ref{all-step}. Since this is not the case, Step~\ref{half-step} checks if any such subset is a cherry in half of the trees of $\cP$. Assuming that the algorithm iterates through all  $2$-element subset of $X$ in numerical ordering, $\{1,2\}$ is the first such subset that is found because $\{1,2\}$ is a cherry of each tree in $\cP_1=\{\cT_1, \cT_3, \cT_4,\cT_6\}$. Moreover, there is a bijection $\varphi:\cP_1\to \cP-\cP_1$ such that, for each $\cS\in\cP_1$, $\{1,2\}$ is a reticulated cherry with reticulation leaf 2 of the unique binary normal network whose display set is $\{\cS,\varphi(\cS)\}$. In particular, we have $\varphi(\cT_1)=\cT_5$, $\varphi(\cT_3)=\cT_7$, $\varphi(\cT_4)=\cT_8$, and $\varphi(\cT_6)=\cT_2$.  {\sc Display Set Compatibility} is then recursively called for $\cP-\cP_1$.

To establish the correctness of {\sc Display Set Compatibility}, we begin with three lemmas. Let $\cN$ be a phylogenetic network on $X$, and suppose that $\{x, y\}$ is a reticulated cherry of $\cN$, where $x$ is the reticulation leaf. Observe that $(p_y, p_x)$ is an edge of $\cN$. Furthermore, we denote the parent of $p_x$ that is not $p_y$ by $g_x$. Throughout the remainder of this section, we freely use the fact that if $\cP$ is a collection of binary phylogenetic $X$-trees and $\cP$ is the display set of a binary normal network on $X$, then $\cN$ is the unique such network~\cite[Corollary~3.2]{willson11}. Moreover, if $\cT\in \cP$, then there is a unique embedding of $\cT$ in $\cN$.

\begin{figure}[t]
\center
\scalebox{0.92}{\input{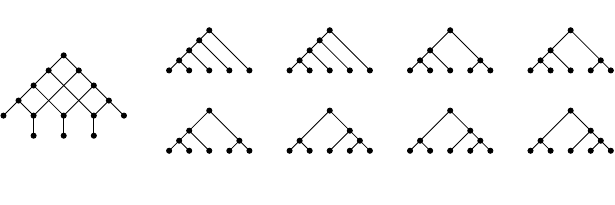_t}}
\caption{Eight binary phylogenetic trees $\cP=\{\cT_1,\ldots,\cT_8\}$ and the unique binary normal phylogenetic network that displays $\cP$.}
\label{fig:tightly-normal-compatible}
\end{figure}

\begin{lemma}
Let $\cN$ be a binary normal network on $X$, and let $\cP$ be the display set of $\cN$.
\begin{enumerate}[{\rm (i)}]
\item If $\{a, b\}$ is a cherry of $\cN$, then $\{a, b\}$ is a cherry of every tree in $\cP$.

\item If $\{a, b\}$ is a reticulated cherry of $\cN$ with $a$ being the reticulation leaf, then $\{a, b\}$ is a cherry of exactly half of the trees in $\cP$. Furthermore, if $\cP_1$ denotes the subset of trees in $\cP$ with $\{a, b\}$ as a cherry, then there is a bijection $\varphi: \cP_1\rightarrow \cP-\cP_1$ such that, for all $\cT_1\in \cP_1$, the pair $\{a,b\}$ is a reticulated cherry with reticulation leaf $a$ of the unique binary normal network whose display set is $\{\cT_1, \varphi(\cT_1)\}$.
\end{enumerate}
\label{cherries}
\end{lemma}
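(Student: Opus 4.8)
The plan is to exploit two facts recorded just above the lemma: a binary normal network $\cN$ with $k$ reticulations displays exactly $2^k$ binary phylogenetic $X$-trees, and each such tree has a \emph{unique} embedding in $\cN$. Every embedding is obtained by retaining, at each reticulation, exactly one of its two incoming reticulation edges (and then deleting the other and suppressing the resulting degree-two vertices), so there are exactly $2^k$ possible selections, one per reticulation. Since every displayed binary tree arises from such a selection, there are at most $2^k$ of them; as there are exactly $2^k$, and embeddings are unique, the map sending a selection to its displayed tree is a bijection between the $2^k$ selections and the trees in $\cP$. Both parts of the lemma are then read off from the local behaviour at the cherry or reticulated cherry under these selections.

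For part (i), suppose $\{a,b\}$ is a cherry of $\cN$, so $p_a=p_b=:v$ is a tree vertex whose two children are the leaves $a$ and $b$, and $(v,a)$ and $(v,b)$ are the unique edges directed into $a$ and $b$. Every embedding must use both of these edges in order to reach $a$ and $b$, so $v$ retains out-degree two and is never suppressed. Hence $a$ and $b$ share the parent $v$ in every tree of $\cP$, that is, $\{a,b\}$ is a cherry of every tree in $\cP$.

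For part (ii), suppose $\{a,b\}$ is a reticulated cherry with reticulation leaf $a$. Then $p_a$ is a reticulation with child $a$ and parents $p_b$ and $g_a$, while $p_b$ is a tree vertex with children $b$ and $p_a$. I first analyse the selection at $p_a$. If an embedding retains $(p_b,p_a)$, then $p_a$ is suppressed to the edge $p_b\to a$ and $p_b$ acquires the two leaf children $a$ and $b$, so $\{a,b\}$ is a cherry of the resulting tree. If instead the embedding retains $(g_a,p_a)$, then $p_b$ loses the child $p_a$ and is suppressed, so $b$ and $a$ attach at the images of the parent of $p_b$ and of $g_a$, respectively; because $\cN$ has no shortcut, neither of $p_b,g_a$ is a descendant of the other (a directed path between them would, together with $(g_a,p_a)$ or $(p_b,p_a)$, make one of these edges a shortcut), so $a$ and $b$ do not become siblings. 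Thus $\{a,b\}$ is a cherry of the displayed tree precisely when the selection at $p_a$ retains $(p_b,p_a)$, and since exactly $2^{k-1}$ of the $2^k$ selections do so, the bijection above shows that exactly half of the trees in $\cP$ have $\{a,b\}$ as a cherry.

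To build $\varphi$, I pair each $\cT_1\in\cP_1$ with the tree $\varphi(\cT_1)$ obtained from the same selection except that the choice at $p_a$ is flipped from $(p_b,p_a)$ to $(g_a,p_a)$; flipping a single coordinate is a bijection from $\cP_1$ onto $\cP-\cP_1$. As $\cT_1$ and $\varphi(\cT_1)$ come from selections agreeing at every reticulation other than $p_a$, deleting $a$ from each yields the same binary $(X-\{a\})$-tree $\cT^{\ast}$. I then reinsert $a$: subdivide the edge of $\cT^{\ast}$ entering $b$ by a new vertex $p_b$ and the edge at which $a$ attaches in $\varphi(\cT_1)$ by a new vertex $g_a$, and add a reticulation $p_a$ with incoming edges $(p_b,p_a)$, $(g_a,p_a)$ and child $a$, obtaining a binary network $\cN_1$ in which $\{a,b\}$ is a reticulated cherry with reticulation leaf $a$; retaining $(p_b,p_a)$ or $(g_a,p_a)$ recovers $\cT_1$ or $\varphi(\cT_1)$, so the display set of $\cN_1$ is $\{\cT_1,\varphi(\cT_1)\}$. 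It then remains to check that $\cN_1$ is normal, after which the uniqueness of normal networks with a prescribed display set identifies $\cN_1$ as the unique such network and completes the proof. The main obstacle is exactly this normality check, and within it the absence of shortcuts: this requires transferring the incomparability of $p_b$ and $g_a$ from $\cN$ to $\cN_1$ and verifying that the two edge subdivisions used to reinsert $a$ do not create a directed path between $p_b$ and $g_a$ (tree-childness being routine, since $a$ and $b$ are leaf children and every remaining vertex keeps a tree path into $\cT^{\ast}$).
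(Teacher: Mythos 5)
Your counting framework is essentially the paper's: the paper likewise observes that exactly half of the trees in $\cP$ use $(p_b,p_a)$ in their unique embeddings and defines $\varphi(\cT_1)$ to be the tree whose embedding is obtained from that of $\cT_1$ by swapping $(p_b,p_a)$ for $(g_a,p_a)$. Where you genuinely diverge is in producing the binary normal network with display set $\{\cT_1,\varphi(\cT_1)\}$: the paper builds it top-down, deleting from $\cN$ every reticulation edge except $(p_b,p_a)$ and $(g_a,p_a)$ and suppressing the resulting degree-two vertices. That choice makes normality nearly free, since any directed path in the pruned network lifts to a directed path in $\cN$, so a shortcut downstairs would already be a shortcut in $\cN$ (and tree-childness survives the pruning). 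Your bottom-up rebuild from $\cT^{\ast}=\cT_1\backslash a$ is also viable, but it is exactly what turns normality into the ``main obstacle'' you flag and then leave unproven, and that is the one genuine gap in the write-up. It closes in two lines: a directed path from $p_b$ to $g_a$ in $\cN_1$ is impossible because the descendants of $p_b$ in $\cN_1$ are precisely $p_a$, $a$, and $b$; and a directed path from $g_a$ to $p_b$ in $\cN_1$ would mean the parent of $a$ in $\varphi(\cT_1)$ is a strict ancestor of $b$, which lifts through the unique embedding (every path to $b$ passes through $p_b$) to $g_a$ being an ancestor of $p_b$ in $\cN$, making $(g_a,p_a)$ a shortcut via $g_a\rightarrow\cdots\rightarrow p_b\rightarrow p_a$. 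Without this, the pairing claim is not finished.

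A smaller repair: your parenthetical argument that retaining $(g_a,p_a)$ never yields the cherry $\{a,b\}$ invokes only the incomparability of $p_b$ and $g_a$, but incomparability alone does not suffice; you also need tree-childness at $g_a$. Since $p_a$ is a reticulation child of $g_a$, the other child $c$ of $g_a$ must be a tree vertex or a leaf, so the tree edge $(g_a,c)$ is retained in every embedding and $g_a$ is never suppressed; hence the sibling subtree of $a$ in the displayed tree sits below $c$, is non-empty (tree paths consist of retained edges), and contains $b$ only if $g_a$ is an ancestor of $p_b$, which incomparability forbids. Were $g_a$'s other child permitted to be a reticulation whose edge from $g_a$ is deselected, $g_a$ would be suppressed and $a$ could become a sibling of $b$ at a higher branching vertex even with $p_b$ and $g_a$ incomparable, so the tree-child hypothesis is doing real work here. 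With these two points supplied, your proof is complete and, apart from the rebuild-versus-prune construction, runs parallel to the paper's argument.
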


\begin{proof}
The proof of (i) is straightforward and omitted. For the proof of (ii), first observe that half of the trees in $\cP$ use $(p_b, p_a)$ in their (unique) embedding in $\cN$. Let $\cT_1$ be such a tree. Then $\{a, b\}$ is a cherry of $\cT_1$. Let $\cT$ be the tree in $\cP$ whose embedding in $\cN$ is obtained from the embedding of $\cT_1$ in $\cN$ by replacing the edge $(p_b, p_a)$ with $(g_a, p_a)$. Then the  normal network obtained from $\cN$ by deleting all reticulation edges except $(p_b, p_a)$ and $(g_a, p_a)$, and suppressing the resulting degree-two vertices is the unique binary normal network whose display set is $\{\cT_1, \cT\}$. Part (ii) now follows.
\end{proof}

In part, the next two lemmas are converses of Lemma~\ref{cherries}(i) and~(ii), respectively.

\begin{lemma}
Let $\cP$ be a collection of binary phylogenetic $X$-trees. Suppose that $\{a, b\}$ is a cherry of every tree in $\cP$. Let $\cP'$ be the collection of binary phylogenetic $(X-\{a\})$-trees obtained from $\cP$ by replacing each tree $\cT$ in $\cP$ with $\cT\backslash a$. Then $\cP$ is tightly normal compatible if and only if $\cP'$ is tightly normal compatible. Furthermore, if $\cP$ is tightly normal compatible, then $\{a, b\}$ is a cherry of the unique binary normal network on $X$ whose display set is $\cP$.
\label{all-cherries}
\end{lemma}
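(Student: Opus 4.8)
The plan is to reduce both directions of the equivalence to a single structural fact, namely the ``Furthermore'' clause: if $\cP$ is tightly normal compatible with (unique) binary normal network $\cN$ on $X$ having display set $\cP$, then $\{a,b\}$ is a cherry of $\cN$. Granting this, both implications follow from a routine pendant-leaf construction. For the forward direction I would form the network obtained from $\cN$ by deleting the leaf $a$ and suppressing the resulting degree-two vertex $p_a=p_b$; since $\{a,b\}$ is a cherry of $\cN$, this neither creates a shortcut nor destroys any tree path, so the result is a binary normal network on $X-\{a\}$ with the same number of reticulations, and its display set is exactly $\{\cT\backslash a:\cT\in\cP\}=\cP'$. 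For the converse, starting from the unique binary normal network $\cN'$ on $X-\{a\}$ with display set $\cP'$, I would subdivide the edge entering $b$ with a new vertex and append $a$ to it; this adds no reticulation and keeps the network normal, and every displayed tree arises from one in $\cP'$ by expanding the leaf $b$ into the cherry $\{a,b\}$, so the display set is exactly $\cP$.

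So the crux is the structural fact, which I would prove by analysing the parent $p_a$ of $a$ in $\cN$ and showing that $p_a=p_b$ is the only consistent possibility. The tools are that tree edges lie in every embedding while each reticulation contributes exactly one of its two in-edges; that every vertex has a tree path (Lemma~\ref{l:tree-path}); that the two parents of any reticulation are incomparable because $\cN$ has no shortcut; and, crucially, Lemma~\ref{cherries}(ii), which says a reticulated cherry is a cherry of only half the trees in $\cP$. First, if $p_a$ were a reticulation with parents $u,w$, then $u,w$ are incomparable, and each has a second child, $u'$ and $w'$, that is a leaf or tree vertex (tree-child applied at $u$ and $w$) and hence carries a tree path. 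Comparing the embedding that routes $a$ through $u$ with the one that routes it through $w$, all other reticulation choices fixed, the cherry condition forces a tree path to $b$ from both $u'$ and $w'$; but the sequence of tree vertices obtained by following parents up from $b$ is unique, so it cannot meet both $u'$ and $w'$, forcing $u$ and $w$ to be comparable, a contradiction.

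Next I would take $p_a$ to be a tree vertex with children $a$ and $c$ and rule out $c\neq b$. If $c$ is a leaf distinct from $b$, then $a$'s cherry partner is $c$, not $b$. If $c$ is a tree vertex, then $(p_a,c)$ is a tree edge and $c$ has two children each carrying a tree path to a distinct leaf, so $C_\cE(c)$ contains at least two leaves in every embedding and $a$ is never in a cherry. If $c$ is a reticulation, then every embedding retaining $(p_a,c)$ forces $C_\cE(c)=\{b\}$; unless the unique child of $c$ is $b$ itself, some interior tree vertex of the tree path $c\leadsto b$ has a side branch realising a second leaf of $C_\cE(c)$ in a suitable embedding, again a contradiction. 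The remaining possibility, $c=p_b$, makes $\{a,b\}$ a reticulated cherry of $\cN$, which by Lemma~\ref{cherries}(ii) is a cherry of only half of $\cP$, contradicting that $\{a,b\}$ is a cherry of every tree. Hence $c=b$ and $\{a,b\}$ is a cherry of $\cN$.

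I expect the reticulation subcases to be the main obstacle, since there one must track how deleting a single reticulation in-edge re-routes the leaves $a$ and $b$, and then convert ``$b$ is reachable along a tree path from two sides'' into comparability of two reticulation parents. This is precisely where the no-shortcut hypothesis does the work, via incomparability of the parents and the uniqueness of the upward tree path to $b$, and it is the step most prone to hidden edge cases, for instance when $u'$ or $w'$ is itself the leaf $b$, or when the tree path $c\leadsto b$ is a single edge.
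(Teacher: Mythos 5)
Your proposal is correct, and its skeleton matches the paper's: establish the ``Furthermore'' clause first (the cherry $\{a,b\}$ appears in the unique network $\cN$), then obtain both directions of the equivalence by the routine pendant-leaf deletion/attachment constructions, exactly as the paper does. Where you diverge is inside the structural step. The paper first rules out \emph{both} $p_a$ and $p_b$ being reticulations (each by the incomparability-of-parents argument you also use for $p_a$), and then, with both parents known to be tree vertices, finishes in two quick moves: the tree path from the sibling-child $u$ of $p_a$ must end at $b$, and if $p_a\neq p_b$ the sibling-child $v$ of $p_b$ yields a leaf $\ell'\notin\{a,b\}$ and hence a displayed tree containing the triple $(b,\ell',a)$, a contradiction. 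You instead work entirely from $a$'s side, running a four-way case analysis on the sibling $c$ of $a$, and you dispatch the reticulated-cherry subcase ($c=p_b$ a reticulation) by citing Lemma~\ref{cherries}(ii); the paper never needs that lemma here, because the configuration $p_b$ a reticulation was already excluded symmetrically. Both routes are legitimate (Lemma~\ref{cherries} precedes this lemma, so the appeal is not circular), and your one-sided analysis buys a proof that never mentions $p_b$ until the end, at the cost of a deeper case split and the extra ``side-branch'' embeddings in the reticulation-with-deep-child case. Two small points to tighten: in the ``$c$ is a tree vertex'' case your claim that $C_\cE(c)$ has at least two leaves \emph{in every embedding} is false when a child of $c$ is a reticulation whose in-edge from $c$ is unused -- you only need (and only have) one suitable embedding using that edge, with the two leaves distinct by the same no-shortcut/no-cycle tracing you use elsewhere; and in the forward construction you should handle the degenerate case $|X|=2$, where $p_a=p_b$ is the root and, as the paper notes, the root must additionally be deleted after suppression.
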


\begin{proof}
First suppose that $\cP$ is tightly normal compatible. Then there is a unique binary normal network $\cN$ on $X$ whose display set is $\cP$. If $p_a$ is a reticulation in $\cN$, then, as $\cN$ has no shortcuts, there is a tree path from a parent of $p_a$ to a leaf that is not $b$, and so there is a tree in $\cN$ that does not have $\{a, b\}$ as a cherry, a contradiction. Thus $p_a$ is a tree vertex and, similarly, $p_b$ is a tree vertex. Let $u$ be the child vertex of $p_a$ that is not $a$. Since $\cN$ is normal, there is a tree path from $u$ to a leaf $\ell$. If $\ell\neq b$, then there is a tree in $\cP$ whose unique embedding in $\cN$ uses $(p_a, u)$ and does not have $\{a, b\}$ as a cherry, a contradiction. Therefore $\ell=b$. Assume that $p_a\neq p_b$, and let $v$ be the child vertex of $p_b$ that is not $b$. Then there is a tree path in $\cN$ from $v$ to a leaf $\ell'$ such that $\ell'\not\in \{a, b\}$. But then there is a tree in $\cP$ whose unique embedding in $\cN$ uses $(p_b, v)$ and displays the triple $(b,\ell',a)$, a contradiction. Hence $p_a=p_b$ and so $\{a, b\}$ is a cherry of $\cN$. In turn, this implies that, if $\cN'$ is the normal network obtained from $\cN$ by deleting $a$, suppressing the resulting degree-two vertex and, if $p_a$  coincides with the root of $\cN$, additionally deleting the root, then $\cN'$ displays $\cP'$ and is the unique binary normal network whose display set is $\cP'$.

Now suppose that $\cP'$ is tightly normal compatible. Then there is a unique binary normal network $\cN'$ on $X-\{a\}$ whose display set is $\cP'$. Let $\cN$ be the normal network on $X$ obtained from $\cN'$ by subdividing the edge directed into $b$ with a new vertex $p_b$ and adjoining a new leaf $a$ to $p_b$ via a new edge $(p_b, a)$. Since the display set of $\cN'$ is $\cP'$, it is easily seen that the display set of $\cN$ is $\cP$. It follows that $\cP$ is tightly normal compatible.
\end{proof}

\begin{lemma}
Let $\cP$ be a collection of binary phylogenetic $X$-trees. Suppose that $\{a, b\}$ is a cherry of exactly half of the trees in $\cP$, and let $\cP_1$ be the collection of such trees. Furthermore, suppose that, for some $x\in \{a, b\}$, there is a bijection $\varphi: \cP_1\rightarrow \cP-\cP_1$ such that, for all $\cT_1$ in $\cP_1$, the pair $\{a, b\}$ is a reticulated cherry with reticulation leaf $x$ of the unique binary normal network whose display set is $\{\cT_1, \varphi(\cT_1)\}$.
\begin{enumerate}[{\rm (i)}]
\item If $\cP$ is tightly normal compatible, then $\cP-\cP_1$ is tightly normal compatible, and $\{a, b\}$ is a reticulated cherry with reticulation leaf $x$ of the unique binary normal network on $X$ whose display set is $\cP$.

\item Suppose that $\cP-\cP_1$ is tightly normal compatible, and let $\cN'$ be the unique binary normal network on $X$ whose display set is $\cP-\cP_1$. Let $\cN$ be the network on $X$ obtained from $\cN'$ by subdividing the edges directed into $x$ and $y$ with $p_x$ and $p_y$, respectively, and adding the new edge $(p_y,p_x)$, where $\{x,y\}=\{a,b\}$. If $\cN$ is normal, then $\cP$ is tightly normal compatible, and $\cN$ is the unique binary normal network whose display set is $\cP$.
\end{enumerate}
\label{half-cherries}
\end{lemma}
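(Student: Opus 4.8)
The plan is to treat the two parts by opposite methods: part~(ii) is constructive and rests on analysing the two switchings at the newly created reticulation, while part~(i) is structural and extracts the reticulated cherry from the display set using normality. Throughout I write $y$ for the element of $\{a,b\}-\{x\}$, and I use repeatedly that every binary tree in the display set of a binary normal network arises from a unique \emph{switching} (a choice of one incoming edge at each reticulation) and conversely, so that a binary normal network with $k$ reticulations has exactly $2^k$ displayed trees, all distinct.

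For part~(ii), since $\cN$ is given to be normal and has exactly one more reticulation than $\cN'$, namely $p_x$, its display set has size $2\,|\cP-\cP_1|=|\cP|$; so, by uniqueness of the binary normal network realising a given display set, it suffices to show that the display set of $\cN$ equals $\cP$. I would split the switchings of $\cN$ according to the choice at $p_x$. Choosing the edge $(g_x,p_x)$ deletes $(p_y,p_x)$ and, after suppression, recovers $\cN'$ exactly, so these switchings produce precisely the display set $\cP-\cP_1$ of $\cN'$. Choosing $(p_y,p_x)$ instead attaches $x$ as a sibling of $y$, producing a tree with $\{a,b\}$ as a cherry; fixing the choices at the other reticulations gives a bijection $\psi$ from the first family to the second, and I would check that $\psi(\cT')\backslash x=\cT'\backslash x$ because both choices leave $y$ in the same place. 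The final step is to identify $\psi$ with $\varphi^{-1}$: both send a non-cherry tree $\cT'$ to the unique binary tree obtained by re-attaching $x$ as a sibling of $y$ (the pairwise hypothesis on $\varphi$ says exactly this), and such a tree is uniquely determined by $\cT'\backslash x$ together with the requirement that $\{a,b\}$ be a cherry. Hence the second family is $\varphi^{-1}(\cP-\cP_1)=\cP_1$, and the display set of $\cN$ is $(\cP-\cP_1)\cup\cP_1=\cP$.

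For part~(i), let $\cN$ be the unique binary normal network with display set $\cP$. First, $p_a\neq p_b$, since otherwise $\{a,b\}$ would be a cherry of $\cN$ and hence, by Lemma~\ref{cherries}(i), of every tree in $\cP$, contradicting that it is a cherry of only half of them. Next I would show that at least one of $p_a,p_b$ is a reticulation: if both were tree vertices, then neither is ever suppressed in a switching (all leaves survive), so $a$ and $b$ have the fixed, distinct displayed-parents $p_a$ and $p_b$ and can never form a cherry. The heart of the argument is then to show that the reticulation, say $p_x$, has $p_y$ as a parent with $p_y$ a tree vertex, so that $(p_y,p_x)$ is a reticulation edge and $\{a,b\}$ is a reticulated cherry. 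Here normality is the essential tool: choosing $(p_y,p_x)$ suppresses $p_x$ and makes $\{a,b\}$ a cherry independently of all other switchings, while choosing the other incoming edge $(g_x,p_x)$ cannot produce the cherry, because $a$ attaching at $g_x$ and $b$ at the (suppressed) parent of $p_y$ would force $g_x$ to lie on a directed path from that parent to $p_x$, i.e.\ make $(g_x,p_x)$ a shortcut. This simultaneously explains why exactly half the trees have $\{a,b\}$ as a cherry.

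It remains to check that the reticulation leaf is $x$ rather than $y$, and then to conclude. For the identification I would use that, in a two-tree normal network realising $\{a,b\}$ as a reticulated cherry, the reticulation leaf is precisely the leaf whose deletion makes the two trees coincide; the hypothesis asserts this happens under deletion of $x$ for the pairs $\{\cT_1,\varphi(\cT_1)\}$, whereas applying Lemma~\ref{cherries}(ii) to $\cN$ exhibits the analogous pairing for the reticulation leaf of $\cN$, and uniqueness of the pairwise network forces the two to agree. Once $\{a,b\}$ is known to be a reticulated cherry of $\cN$ with reticulation leaf $x$, deleting the reticulation edge $(p_y,p_x)$ and suppressing degree-two vertices yields a binary network $\cN'$ that is still tree-child and shortcut-free, hence normal, and whose display set is exactly the trees of $\cN$ using $(g_x,p_x)$, namely $\cP-\cP_1$; thus $\cP-\cP_1$ is tightly normal compatible. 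The main obstacle is precisely this structural step of part~(i): pinning down the reticulated-cherry configuration and, in particular, verifying that the reticulation leaf is $x$, which requires combining the no-shortcut condition with the uniqueness of the pairwise normal networks supplied by the hypothesis.
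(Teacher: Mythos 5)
Your part~(ii) is essentially correct and in fact supplies the detail that the paper omits (the paper dismisses (ii) as an immediate consequence of the hypothesis): splitting the switchings of $\cN$ at the new reticulation $p_x$, recovering $\cN'$ from the choice $(g_x,p_x)$, and identifying the other family with $\varphi^{-1}(\cP-\cP_1)=\cP_1$ via the fact that a tree with $\{a,b\}$ as a cherry is determined by its restriction away from $x$ is sound. The problems are in part~(i), which is where all the real work lies.

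First, your argument that one of $p_a,p_b$ must be a reticulation is wrong. You claim that if both are tree vertices then ``neither is ever suppressed in a switching,'' so $a$ and $b$ can never form a cherry. But tree vertices \emph{are} suppressed in embeddings: if the child of $p_b$ other than $b$ is a reticulation $w$, then every embedding not using $(p_b,w)$ suppresses $p_b$. Concretely, if $(p_a,p_b)$ is an edge of $\cN$ and $w$ is a reticulation, then exactly half of the displayed trees (those avoiding $(p_b,w)$) have $\{a,b\}$ as a cherry, so no counting or suppression argument can rule this configuration out. This is precisely the hard case in the paper's proof: it must invoke the hypothesis on $\varphi$, constructing a displayed tree whose restriction to $\{a,b,\ell,m\}$ is the caterpillar $(\ell,b,a,m)$ and deducing that $\cP$ would have to contain a tree restricting to $(a,b,\ell,m)$ that $\cN$ does not display. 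Your one-sentence dismissal skips roughly half of the paper's proof.

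Second, your identification of the reticulation leaf as $x$ rather than $y$ rests on a false criterion. You assert that in a two-tree normal network the reticulation leaf is ``precisely the leaf whose deletion makes the two trees coincide,'' but this leaf need not be unique: for $\cT_1=((a,b),c)$ and $\cT=((a,c),b)$ one has both $\cT_1\backslash a\cong\cT\backslash a$ and $\cT_1\backslash b\cong\cT\backslash b$, while the unique normal network displaying the pair has a fixed reticulation leaf. Your fallback --- that Lemma~\ref{cherries}(ii) applied to $\cN$ gives a pairing with reticulation leaf $y$, and ``uniqueness of the pairwise network forces the two to agree'' --- does not close the gap either, because uniqueness applies only to a \emph{fixed} pair of trees, and the bijection induced by $\cN$ and the hypothesised bijection $\varphi$ need not pair any tree $\cT_1\in\cP_1$ with the same partner. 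The paper's proof of this step is correspondingly delicate: assuming reticulation leaf $b$, it takes a leaf $\ell$ at the end of a tree path from $g_b$, shows via cluster containment that there is a directed path $P$ from the parent of $p_a$ to $g_b$, proves $P$ is a single edge by a case analysis on its second vertex, and only then derives a contradiction through the two-tree network $\cM$ supplied by the hypothesis. Without an argument of this kind, part~(i) of your proposal does not go through.
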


\begin{proof}
In reference to the initial hypothesis of the lemma, we may assume without loss of generality that $x=a$. For the proof of (i), suppose that $\cP$ is tightly normal compatible, and let $\cN$ be the unique binary normal network on $X$ whose display set is $\cP$. We first show that $\{a, b\}$ is a reticulated cherry of $\cN$ with reticulation leaf $a$. Assume that $p_a$ and $p_b$ are both tree vertices in $\cN$. If $p_a=p_b$, then $\{a, b\}$ is cherry of $\cN$, and so, as $\cN$ displays $\cP$, every tree in $\cP$  has $\{a, b\}$ as a cherry, a contradiction. Thus $p_a\neq p_b$. Say that neither $p_a$ is an ancestor of $p_b$ nor $p_b$ is an ancestor of $p_a$ in $\cN$. If either $p_a$ or $p_b$ is a parent of a tree vertex or a leaf, then no tree in $\cP$ has $\{a, b\}$ as a cherry. Furthermore, if each of $p_a$ and $p_b$ is a parent of a reticulation $v_a$ and $v_b$, respectively, then no tree in $\cP$ has $\{a, b\}$ as a cherry if $v_a=v_b$, and at most a quarter of the trees in $\cP$ have $\{a, b\}$ as a cherry if $v_a\neq v_b$. Thus we may assume that $p_a$ is an ancestor of $p_b$.  Since $\cN$ is normal, it is now easily checked that $(p_a, p_b)$ is an edge of $\cN$; otherwise, less than half of the trees in $\cP$ have $\{a, b\}$ as a cherry.
If the child of $p_b$ that is not $b$, say $w$, is a tree vertex or a leaf, then no tree displayed by $\cN$ has $\{a, b\}$ as a cherry, so $w$ is a reticulation.
Let $p_w$ be the parent of $w$ that is not $p_b$, and let $\ell$ and $m$ be leaves at the end of tree paths starting at $w$ and $p_w$, respectively. Note that $\ell\neq m$. Furthermore, as $\cN$ is normal, $(p_w, w)$ is not a shortcut, so $a\neq m$.

Since the display set of $\cN$ is $\cP$, there is a tree $\cT$ in $\cP$ whose unique embedding in $\cN$ uses the reticulation edge $(p_b, w)$. In particular, $\cT|\{a, b, \ell, m\}$ is isomorphic to $(\ell, b, a, m)$. By the initial hypothesis, this implies that there is a tree $\cT_1$ in $\cP$ such that $\cT_1|\{a, b, \ell, m\}$ is isomorphic to $(a, b, \ell, m)$. But $\cT_1$ is not displayed by $\cN$, a contradiction. Therefore $p_a$ is not an ancestor of $p_b$. A similar argument also shows that $p_b$ is not an ancestor of $p_a$. Hence either $p_a$ or $p_b$ is a reticulation.

If $p_b$ is a reticulation, then, as exactly half of the trees in $\cP$ have $\{a, b\}$ as a cherry, it is easily seen that $(p_a, p_b)$ is an edge in $\cN$. Thus $\{a, b\}$ is a reticulated cherry of $\cN$ with reticulation leaf $b$. Similarly, if $p_a$ is a reticulation of $\cN$, then $\{a,b\}$ is a reticulated cherry of $\cN$ with reticulation leaf $a$. We next show $\{a,b\}$ is not a reticulated cherry of $\cN$ with reticulation leaf $b$, thereby showing that $p_b$ is not a reticulation.

Suppose that $\{a,b\}$ is a reticulated cherry of $\cN$ with reticulation leaf $b$. Let $\ell$ be the leaf at the end of a tree path in $\cN$ starting at $g_b$. By considering the edge $(g_b, p_b)$, half of the trees in $\cP$ have $(b,\ell,a)$ as a triple (note that $p_a$, and therefore $a$, cannot be a descendant of $g_b$ else $(g_b,p_b)$ is a shortcut). Moreover, by the initial hypothesis and as $x=a$, if $\cT'$ is a tree in $\cP$ and $(b,\ell,a)$ is not a  triple of $\cT'$, then $\{a, b\}$ is a cherry of $\cT'$ and the cluster of $\cT'$ corresponding to the grandparent of $b$ in $\cT'$, excluding $a$, is a non-empty subset of the cluster of $g_b$ in $\cN$. Since $\cT'$ is displayed by $\cN$ and the unique embedding of $\cT'$ in $\cN$ uses $(p_a, p_b)$, it follows that there exists a directed path $P$ from the parent of $p_a$ to $g_b$ in $\cN$. 

We now show that $P$ consists of a single edge. Let $u$ be the parent of $p_a$, so $u$ is the first vertex in $P$, and let $v$ denote the second vertex in $P$. Assume that $v\neq g_b$. Say $v$ is a tree vertex and let $w$ be the child of $v$ that does not lie on $P$. Since $\cN$ has no shortcuts, such a vertex exists. Now there is a tree path from $w$ to a leaf $m$ regardless of whether $w$ is a leaf, a tree vertex, or a reticulation. Note that if $w$ is a leaf, then $m=w$. As $m$ is at the end of a tree path starting at $w$, it follows that $m$ is not a descendant of $g_b$ in $\cN$. Let $\cT$ be a phylogenetic $X$-tree that is displayed by $\cN$  and whose embedding uses the edges $(p_a, p_b)$ and $(v, w)$. Clearly $\cT$ has cherry $\{a,b\}$. Importantly, the cluster of the grandparent of $b$ in $\cT$ contains $m$, a contradiction. Therefore, $v$ is a reticulation. Let $v'$ be the parent of $v$ that is not $u$, and let $m'$ be a leaf at the end of tree path starting at $v'$. Since $(v', v)$ is not a shortcut, $m'\neq a$. Furthermore, $m'$ is also not a descendant of $g_b$ in $\cN$. But then again there is a tree displayed by $\cN$ whose unique embedding uses the edges $(p_a, p_b)$, $(v', v)$, and all edges on the subpath of $P$ from $v$ to $g_b$, and whose cluster of the grandparent of $b$ contains $m'$, another contradiction.  Hence $\cP$ consists of a single edge and the parent of $p_a$ is the parent of $g_b$.

Now let $\cT$ be a tree displayed by $\cN$ and whose unique embedding uses the edge $(g_b, p_b)$. Note that $(b,\ell,a)$ is a  triple of $\cT$, and the parents of $a$ and $b$ in $\cT$ are joined by an edge. By the initial hypothesis and since $x=a$, there is a tree $\cT_1$ in $\cP_1$ with $\varphi(\cT_1)=\cT$ such that the unique binary normal network, $\cM$ say, whose display set is $\{\cT_1, \cT\}$ has $\{a, b\}$ as a reticulated cherry with reticulation leaf $a$. As $\cM$ is normal, the leaf $\ell$ which is at the end of a tree path starting at $g_b$ in $\cN$ is not a descendant of $g_a$ in $\cM$, otherwise, $(a,\ell,b)$ is a  triple of $\cT$. Since $(b,\ell,a)$ is a  triple of $\cT$, the tree displayed by $\cM$ that uses $(g_a, p_a)$ does not have the property that the parents of $a$ and $b$ are joined by an edge. This last contradiction establishes that $\{a, b\}$ is not a reticulated cherry of $\cN$ with reticulation leaf $b$. It now follows that $\{a, b\}$ is  a reticulated cherry of $\cN$ with reticulation leaf $a$.

To complete the proof of (i), observe that the network obtained from $\cN$ by deleting the reticulation edge $(p_b,p_a)$ of the reticulated cherry $\{a, b\}$ with reticulation leaf $a$ and suppressing the resulting degree-two vertices is binary normal and has display set $\cP-\cP_1$. The proof of (ii) is an immediate consequence of the initial hypothesis.
\end{proof}

The next theorem shows that {\sc Display Set Compatibility} correctly decides if a collection of binary phylogenetic trees is tightly normal compatible.

\begin{theorem}
Let $\cP$ be a collection of binary phylogenetic $X$-trees. Then {\sc Display Set Compatibility} applied to $\cP$ correctly determines if $\cP$ is tightly normal compatible, in which case, it returns the unique binary normal network whose display set is $\cP$.
\label{dsc}
\end{theorem}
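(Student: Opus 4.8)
The plan is to argue by induction on $|X|+|\cP|$, proving simultaneously the two halves of the statement: that {\sc Display Set Compatibility} returns a binary normal network \emph{exactly} when $\cP$ is tightly normal compatible, and that the network returned is then the unique one with display set $\cP$. Termination and well-foundedness of the induction are immediate, since Step~\ref{all-step} strictly decreases $|X|$ while leaving $|\cP|$ unchanged (deleting $b$ from the trees is injective on trees sharing the cherry $\{a,b\}$, as $\cT$ is recoverable from $\cT\backslash b$), and Step~\ref{half-step} halves $|\cP|$ while leaving $X$ fixed. The base cases $|X|\in\{1,2\}$ are settled by Steps~2 and~3: the only phylogenetic $X$-tree is the single vertex or the cherry, so $\cP$ is automatically the display set of the returned network. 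Step~1 is correct because a binary normal network with $k$ reticulations has display set of size $2^k$, so any $\cP$ whose cardinality is not a power of two cannot be tightly normal compatible.

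For the inductive step with $|X|\ge 3$ and $|\cP|=2^{k}$, I would split into completeness and soundness. For completeness, assume $\cP$ is tightly normal compatible with unique network $\cN$; by~\cite{bor16}, $\cN$ has a cherry or a reticulated cherry. If $\cN$ has a cherry, Lemma~\ref{cherries}(i) forces some pair to be a cherry of every tree in $\cP$, so Step~\ref{all-step} fires, and the recursion on $\cP'$ (tightly normal compatible by Lemma~\ref{all-cherries}) returns the correct network by the induction hypothesis, after which reattaching the deleted leaf gives the unique network for $\cP$. If $\cN$ has no cherry but a reticulated cherry, then no pair is a cherry of all of $\cP$ (otherwise it would be a cherry of $\cN$ by Lemma~\ref{all-cherries}), so Step~\ref{all-step} is skipped and Lemma~\ref{cherries}(ii) shows the reticulated cherry of $\cN$ witnesses the hypotheses of Step~\ref{half-step}, which therefore fires. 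For soundness I would trace each exit: a network returned in Step~\ref{all-step} is correct by Lemma~\ref{all-cherries}, one returned in Step~\ref{half-step} is correct by Lemma~\ref{half-cherries}(ii) (its normality test is precisely that part's hypothesis), and each ``not tightly normal compatible'' verdict is justified by the contrapositives of Lemmas~\ref{all-cherries} and~\ref{half-cherries}(i), or—for the final else branch—by the cherry/reticulated-cherry dichotomy of~\cite{bor16}.

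The point requiring the most care, and which I would emphasise, is that in Steps~\ref{all-step} and~\ref{half-step} the algorithm may select \emph{any} admissible pair $\{a,b\}$, not necessarily one arising from a (reticulated) cherry of the hypothetical $\cN$, so correctness must hold for every such choice. This is exactly why Lemmas~\ref{all-cherries} and~\ref{half-cherries} are stated for an arbitrary pair meeting the combinatorial conditions rather than for a fixed feature of $\cN$: Lemma~\ref{all-cherries} shows any all-trees cherry is forced to be a cherry of $\cN$ and that the deletion preserves tight normal compatibility in both directions, and Lemma~\ref{half-cherries} does the analogous work for the half-cherry condition. The genuinely delicate sub-case is the normality check in Step~\ref{half-step}: the network $\cN$ reconstructed from the recursively returned $\cN'$ could \emph{a priori} fail to be normal even though the Step~\ref{half-step} conditions hold. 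I would close this by arguing that if $\cP$ were tightly normal compatible, then by Lemma~\ref{half-cherries}(i) the pair $\{a,b\}$ would be a reticulated cherry (with the prescribed reticulation leaf) of the true network $\cN^{\ast}$, whose reticulation-edge deletion yields a binary normal network with display set $\cP-\cP_1$ and hence equals $\cN'$ by uniqueness~\cite{willson11}; so the reconstructed $\cN$ would coincide with $\cN^{\ast}$ and be normal, contradicting the failed check. This shows the normality test neither rejects a genuine solution nor accepts a spurious one, completing the verification of all exits.
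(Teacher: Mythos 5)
Your proposal is correct and follows essentially the same route as the paper's proof: induction on the size of the instance (the paper uses $k+|X|$, equivalent to your $|X|+|\cP|$), with Lemma~\ref{cherries} and the cherry/reticulated-cherry dichotomy of~\cite{bor16} driving completeness, Lemmas~\ref{all-cherries} and~\ref{half-cherries} (stated for arbitrary admissible pairs, exactly as you emphasise) driving soundness, and the same contradiction argument via Lemma~\ref{half-cherries} and uniqueness~\cite{willson11} disposing of the normality check in Step~\ref{half-step}. Your explicit remarks on the injectivity of leaf deletion and on the rebuilt network coinciding with the true network make a point the paper passes over quickly, but the underlying argument is the same.
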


\begin{proof}
We may assume that $|\cP|=2^k$ for some non-negative integer $k$; otherwise, the theorem holds. The proof of the correctness of {\sc Display Set Compatibility} is by induction on $k+|X|$. If $k=0$ or $|X|\le 2$, the theorem clearly holds. Thus we may assume that $k\ge 1$ and $|X|\ge 3$. Suppose that the theorem holds for all collections of  binary phylogenetic $X'$-trees of size $2^{k'}$, where $k'$ is a non-negative integer and $k'+|X'| < k+|X|$.

First assume that $\cP$ is tightly normal compatible. Then there is a unique binary normal network $\cN$ on $X$ whose display set is $\cP$. If $\{x, y\}$ is a cherry of $\cN$, then, by Lemma~\ref{cherries}(i), all trees in $\cP$ have $\{x, y\}$ as a cherry. On the other hand  if $\cN$ has no cherries, then, by \cite[Lemma~4.1]{bor16}, $\cN$ has a reticulated cherry, say $\{x, y\}$ with reticulation leaf $x$. Furthermore, as the display set of $\cN$ is $\cP$, Lemma~\ref{cherries}(ii) implies that exactly half of the trees in $\cP$ have $\{x, y\}$ as a cherry and there is a bijection from the subset $\cP_1$ of trees in $\cP$ that have $\{x, y\}$ as a cherry to $\cP-\cP_1$ such that, for all $\cT_1$ in $\cP_1$, the $2$-elements subset $\{x, y\}$ of $X$ is a reticulated cherry with reticulation leaf $x$ of the unique binary normal network whose display set is $\{\cT_1, \varphi(\cT_1)\}$. Thus {\sc Display Set Compatibility} applied to $\cP$ constructs either (i) a collection $\cP'$ of binary phylogenetic $(X-\{a\})$-trees by replacing each tree $\cS$ in $\cP$ with $\cS\backslash a$ for some cherry $\{a, b\}$ of $\cN$ or (ii) a collection $\cP-\cP_1$ for some reticulated cherry $\{a, b\}$ with reticulation leaf $a$, where $\cP_1$ is the subset of trees in $\cP$ with $\{a, b\}$ as a cherry. If (i) holds, it follows by induction and Lemma~\ref{all-cherries} that {\sc Display Set Compatibility} applied to $\cP'$ correctly determines that $\cP'$ is tightly normal compatible and returns the unique binary normal network on $X-\{a\}$ whose display set is $\cP'$. In turn this implies that {\sc Display Set Compatibility} correctly determines that $\cP$ is tightly normal compatible and returns the unique  normal network on $X$ whose display set is $\cP$, namely, $\cN$. So assume that (ii) holds. Then, by induction and Lemma~\ref{half-cherries}(i), {\sc Display Set Compatibility} applied to $\cP-\cP_1$ correctly determines that $\cP-\cP_1$ is tightly normal compatible and returns the unique binary normal network on $X$, say $\cN'$, whose display set is $\cP-\cP_1$. The algorithm {\sc Display Set Compatibility} correctly reconstructs $\cN$ from $\cN'$. Therefore, as $\cN$ is normal, {\sc Display Set Compatibility} correctly determines that $\cP$ is tightly normal compatible and returns the unique binary normal network on $X$ whose display set is $\cP$, namely $\cN$.

Now assume that $\cP$ is not tightly normal compatible. If there is no pair of leaves $a$ and $b$ satisfying the conditions in Step~\ref{all-step} or Step~\ref{half-step}, then, by Lemma~\ref{cherries}, $\cP$ is not tightly normal compatible and {\sc Display Set Compatibility} applied to $\cP$ correctly determines this outcome. So assume that there is such a pair of leaves. Then {\sc Display Set Compatibility} applied to $\cP$ constructs either (I) a collection $\cP'$ of phylogenetic $(X-\{a\})$-trees from $\cP$ by replacing each tree $\cS$ in $\cP$ with $\cS\backslash a$ if $\{a, b\}$ is a cherry of every tree in $\cP$, or (II) a collection $\cP-\cP_1$ of $2^{k-1}$ binary phylogenetic $X$-trees, where $\cP_1$ is the subset of trees in $\cP$ in which $\{a, b\}$ is a cherry. If (I) holds, then, by Lemma~\ref{all-cherries}, $\cP'$ is not tightly normal compatible and so, by induction, {\sc Display Set Compatibility} applied to $\cP'$ correctly determines that $\cP'$ is not tightly normal compatible. In turn, this implies that {\sc Display Set Compatibility} applied to $\cP$ correctly determines that $\cP$ is not tightly normal compatible. On the other hand, if (II) holds, then, by the working of the algorithm, we may assume that there is a bijection $\varphi: \cP_1\rightarrow \cP-\cP_1$ such that, for all $\cT_1\in \cP$, the pair $\{a, b\}$ is a reticulated cherry with reticulation leaf $a$ of the unique binary normal network whose display set is $\{\cT_1, \varphi(\cT_1)\}$. If $\cP-\cP_1$ is not tightly normal compatible, then, by induction, {\sc Display Set Compatibility} applied to $\cP-\cP_1$ correctly determines that $\cP-\cP_1$ is not tightly normal compatible. It follows that {\sc Display Set Compatibility} applied to $\cP$ correctly determines that $\cP$ is not tightly normal compatible. If $\cP-\cP_1$ is tightly normal compatible, then, by induction, {\sc Display Set Compatibility} applied to $\cP-\cP_1$ returns the unique binary normal network $\cN'$ on $X$ whose display set is $\cP-\cP_1$. Let $\cN$ be the network on $X$ obtained from $\cN'$ by subdividing the edges directed into $a$ and $b$ with new vertices $p_a$ and $p_b$, respectively, and adding the new edge $(p_b, p_a)$. If $\cN$ is normal, then, as $a$ and $b$ satisfy the conditions in Step~\ref{half-step}, and the display set of $\cN'$ is $\cP-\cP_1$, it follows from Lemma~\ref{half-cherries}(ii) that the display set of $\cN$ is $\cP$. But then $\cP$ is tightly normal compatible, a contradiction. So $\cN$ is not normal, and it follows that {\sc Display Set Compatibility} applied to $\cP$ correctly determines that $\cP$ is not tightly normal compatible. This completes the proof of the theorem.
\end{proof}

We end the section with a discussion of the running time of {\sc Display Set Compatibility}. The most time consuming part of {\sc Display Set Compatibility} is deciding whether the conditions in Step~\ref{all-step} or Step~\ref{half-step} hold for some $2$-element subset of $X$. Naively, checking if Step~\ref{all-step} applies takes time $O(|X|^2|\cP|)$. Moreover, checking if half of the trees of $\cP$ have a common cherry takes time $O(|X|^2|\cP|)$. The next lemma shows that it takes $O(|X|^3|\cP|^2)$ time to decide if the bijection in Step~\ref{half-step} exists.

\begin{lemma}
To decide if the bijection in Step~\ref{half-step} exists takes $O(|X|^3 |\cP|^2)$ time.
\end{lemma}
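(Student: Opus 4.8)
The plan is to model the existence of the bijection as a perfect matching in a bipartite graph, and then to observe that one side of this graph has maximum degree one, so that the matching step is essentially free and the whole cost reduces to testing pairwise compatibility. Concretely, I would fix the candidate reticulation leaf $x$ (running the whole test once for $x=a$ and once for $x=b$, since Step~\ref{half-step} asks only that \emph{some} $x$ works) and form the bipartite graph $G$ whose two sides are $\cP_1$ (the trees of $\cP$ having $\{a,b\}$ as a cherry) and $\cP-\cP_1$, each of size $|\cP|/2$, joining $\cT_1\in\cP_1$ to $\cT_2\in\cP-\cP_1$ by an edge exactly when $\{a,b\}$ is a reticulated cherry with reticulation leaf $x$ of the unique binary normal network whose display set is $\{\cT_1,\cT_2\}$. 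By definition, the required bijection $\varphi$ exists precisely when $G$ has a perfect matching, so the task splits into (i) building $G$ and (ii) deciding the matching.

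For (i) I would first give a purely combinatorial characterisation of a single edge of $G$. Assume without loss of generality that $x=a$ and set $\cT_0=\cT_1\backslash a$. I claim $\cT_1$ and $\cT_2$ are compatible if and only if $\cT_2\backslash a=\cT_0$ and the edge $e$ of $\cT_0$ whose subdivision yields the parent of $a$ in $\cT_2$ does not lie on the directed path from the root of $\cT_0$ to $b$. Indeed, if both conditions hold, then subdividing the edge entering $b$ with a vertex $p_b$, subdividing $e$ with a vertex $g_a$, and adding a reticulation $p_a$ with parents $p_b,g_a$ and child $a$ produces a network whose only two reticulation-edge choices recover exactly $\cT_1$ (cherry $\{a,b\}$) and $\cT_2$ ($a$ attached at $e$); this network is automatically tree-child, the edge $(p_b,p_a)$ is never a shortcut because $p_b$ sits at the leaf $b$, and the second condition is exactly the requirement that $(g_a,p_a)$ is not a shortcut, so the network is normal and, by the uniqueness of the binary normal network with a given display set used throughout this section, it is the required network. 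Conversely, deleting $a$ from such a network forces $\cT_2\backslash a=\cT_0$, and normality forces the non-shortcut condition on $e$. Verifying these two conditions amounts to comparing $\cT_1$ and $\cT_2$ after deleting $a$ and testing one shortcut condition on a network with $O(|X|)$ vertices, which can comfortably be carried out within $O(|X|^3)$ time; since there are at most $(|\cP|/2)^2$ pairs, building $G$ costs $O(|X|^3|\cP|^2)$.

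The step I expect to be the main obstacle, and the one that makes the matching in (ii) cheap, is the observation that each $\cT_2\in\cP-\cP_1$ is compatible with \emph{at most one} $\cT_1\in\cP_1$. This holds because compatibility forces $\cT_1\backslash a=\cT_2\backslash a=\cT_0$, while $\cT_1$ must have $\{a,b\}$ as a cherry; hence $\cT_1$ is completely determined by $\cT_0$ (reattach $a$ as the sibling of $b$), so it is unique. Consequently every vertex on the $(\cP-\cP_1)$-side of $G$ has degree at most one, and a perfect matching exists if and only if every such vertex has degree exactly one and the induced map to $\cP_1$ is injective (injectivity then giving bijectivity, as the two sides have equal size). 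This final check reads off directly from the edge list of $G$ in $O(|\cP|^2)$ time, which is dominated by the cost of building $G$; a general matching routine, whose $O(|\cP|^{2.5})$ or $O(|\cP|^3)$ cost need not be bounded by $O(|X|^3|\cP|^2)$ since $|\cP|$ may be exponential in $|X|$, is therefore never needed. Summing the two phases gives the claimed total of $O(|X|^3|\cP|^2)$.
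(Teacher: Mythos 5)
Your core argument is essentially the paper's, re-packaged in matching language: your observation that each tree in $\cP-\cP_1$ has at most one admissible partner in $\cP_1$ (because a tree with cherry $\{a,b\}$ is recovered from its $\backslash a$-reduction by reattaching $a$ as the sibling of $b$) is precisely the paper's uniqueness argument; your edge criterion (equal reductions, plus the attachment edge of $a$ in $\cT_2$ avoiding the root-to-$b$ path of $\cT_0$) is equivalent to the paper's test that $b$ is not a descendant of the parent of $a$ in $\cT_1'$; and your explicit degree-one/injectivity check is a slightly more careful rendering of what the paper leaves implicit. Your remark that a general matching routine must be avoided because $|\cP|$ may be exponential in $|X|$ is also correct and worth making.

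The genuine gap is in the accounting. Step~5 is quantified over all $2$-element subsets $\{a,b\}$ of $X$, and the lemma (as the paper proves it, and as the enclosing analysis requires, since the total bound $O(|X|^4|\cP|^2)$ is obtained by multiplying the lemma's bound by the $O(|X|)$ recursive calls) charges the entire loop over the $O(|X|^2)$ candidate subsets to this lemma: the paper spends $O(|X|)$ per pair of trees, hence $O(|X||\cP|^2)$ per subset $\{a,b\}$, and $O(|X|^3|\cP|^2)$ overall. You instead spend the whole budget on a single fixed $\{a,b\}$, allowing yourself a generous $O(|X|^3)$ per pair of trees; once the required iteration over subsets is restored, your bound inflates to $O(|X|^5|\cP|^2)$, which does not prove the statement. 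The repair is easy and stays entirely within your framework: comparing the leaf-labelled trees $\cT_1\backslash a$ and $\cT_2\backslash a$ and testing whether the attachment edge of $a$ lies on the root-to-$b$ path are both $O(|X|)$ operations, not $O(|X|^3)$. With this tightening, building your graph $G$ and reading off the forced matching costs $O(|X||\cP|^2)$ per subset (times two choices of $x$), and summing over the $O(|X|^2)$ subsets recovers the claimed $O(|X|^3|\cP|^2)$.
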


\begin{proof}
If there is a bijection $\varphi$ for a $2$-element subset $\{a, b\}$ with $x=a$, then $\cT_1$ and $\varphi(\cT_1)$ have the property that $\cT_1\backslash a\cong \varphi(\cT_1)\backslash a$. Say there is a tree $\cT_2$ in $\cP_1$ such that $\cT_2\backslash a\cong \varphi(\cT_1)\backslash a$. Then $\cT_1\backslash a\cong \cT_2\backslash a$ and so, as $\{a, b\}$ is a cherry of $\cT_1$ and $\cT_2$, it follows that $\cT_1\cong \cT_2$. Hence if the bijection in Step~\ref{half-step} exists for $\{a, b\}$ with $x=a$, then it is unique, and so we simply need to find, for each $\cT_1\in \cP_1$, a tree $\cT'_1$ in $\cP-\cP_1$ such that $\cT'_1\backslash a\cong \cT_1\backslash a$ and check that there exists a binary normal network whose display set is $\{\cT_1, \cT'_1\}$ with a reticulated cherry $\{a, b\}$ in which $a$ is a reticulation leaf. For this check, we only need to decide if $b$ is not a descendant of the parent of $a$ in $\cT'_1$. In total, this takes time $O(|X||\cP|^2)$. Since the number of $2$-element subsets of $X$ is $O(|X|^2)$, deciding if the bijection in Step~\ref{half-step} exists takes $O(|X|^3|\cP|^2)$ time.
\end{proof}

\noindent Continuing the discussion, if $|\cP|=2^k$ for some non-negative integer $k$, then we only need to complete Step~\ref{half-step} at most $k$ times, that is, at most $O(|X|)$ times. To see this, note that Step~\ref{half-step} is invoked only if there is no cherry common to all the trees in $\cP$ and, if $\cP$ is tightly normal compatible and $\cN$ is the unique binary normal network whose display set is $\cP$, then $\cN$ has exactly $k$ reticulations. Lastly, {\sc Display Set Compatibility} is called at most $O(|X|+|X|)=O(|X|)$ times. This occurs when $\cP$ is tightly normal compatible or $\cP$ is the display set of a tree-child network with precisely one shortcut and this shortcut is a reticulation edge of a reticulated cherry. Hence, the total running time of {\sc Display Set Compatibility} is $O(|X|^4|\cP|^2)$.

\section{Two phylogenetic trees are always displayed by a normal network}\label{sec:two-normal}

In this section, we show that any two phylogenetic trees are normal compatible. In particular, we show that any two binary phylogenetic trees can be displayed by a binary normal network. An analogous two-tree result for tree-child networks has been known since 2005~\cite{baroni05} and an explicit proof is given in~\cite{linz19}. Moreover, we observe at the end of this section that there are three binary phylogenetic trees that are not normal compatible. This last result is in contrast to tree-child networks, where any set of phylogenetic trees can be displayed by a  tree-child network~\cite{linz19}, and also in contrast to  temporal tree-child networks, where there exist two binary phylogenetic trees that cannot be displayed by a temporal tree-child network~\cite{humphries13b}.
 
We start with a lemma that follows from~\cite[Theorem 1.1]{semple16}.

\begin{lemma}\label{l:embedding}
Let $\cN$ be a binary tree-child network on $X$, and let $\cE$ be a subset of the edges of $\cN$. Then $\cE$ is an embedding of a binary phylogenetic $X$-tree displayed by $\cN$ if and only if $\cE$ contains every tree edge of $\cN$ and, for each reticulation $v$ in $\cN$ with parents $u$ and $u'$, $\cE$ contains exactly one of $(u, v)$ and $(u',v)$.
\end{lemma}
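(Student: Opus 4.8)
The plan is to prove both implications directly from the combinatorial structure of a binary tree-child network, using only that such a network is acyclic and that, by Lemma~\ref{l:tree-path}, every vertex has a tree path. Throughout I would regard an embedding of a binary phylogenetic $X$-tree as a subgraph of $\cN$; the observation that drives everything is that such an embedding is itself a tree (a subdivision of the displayed binary tree), so every vertex it uses has in-degree at most one within the embedding, and every leaf in $X$ is used.

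For the forward direction, suppose $\cE$ is an embedding of a binary phylogenetic $X$-tree. First I would show that $\cE$ uses every vertex of $\cN$. Each leaf in $X$ is used, and every vertex $z$ has, by Lemma~\ref{l:tree-path}, a tree path $z=z_0,z_1,\ldots,z_m$ to a leaf $z_m\in X$; since each $z_i$ with $i\ge 1$ is a tree vertex or a leaf and hence has a unique parent, walking back from the used leaf $z_m$ and repeatedly invoking that a used non-root vertex must be reached through its (unique) incoming edge forces $z_{m-1},\ldots,z_0=z$ to be used as well. Consequently, for any tree edge $(u,c)$ the head $c$ is a non-reticulation that is used and has $(u,c)$ as its only incoming edge, so $(u,c)\in\cE$; hence $\cE$ contains every tree edge. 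Finally, each reticulation $v$ is used, so it is reached through exactly one incoming edge of $\cE$ (at most one because $\cE$ is a tree, at least one because $v$ is used); as the two incoming edges of $v$ are precisely its reticulation edges, exactly one of them lies in $\cE$.

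For the converse, suppose $\cE$ contains every tree edge and exactly one reticulation edge per reticulation. A degree count shows that in $(V,\cE)$, where $V$ is the vertex set of $\cN$, every non-root vertex has in-degree exactly one (a leaf or non-root tree vertex through its unique tree edge, a reticulation through its one chosen reticulation edge) while the root $\rho$ has in-degree zero; since $\cN$ is acyclic, $(V,\cE)$ is an arborescence rooted at $\rho$. Tree-childness then guarantees that every non-leaf vertex has out-degree at least one in $\cE$: a tree vertex (including $\rho$) has a child that is a leaf or a tree vertex, so one of its out-edges is a tree edge and lies in $\cE$, while a reticulation's single out-edge is a tree edge. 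Hence the leaves of this arborescence are exactly the elements of $X$, and suppressing its in-degree-one out-degree-one vertices (the reticulations, together with any tree vertices left with a single child) yields a binary phylogenetic $X$-tree whose embedding in $\cN$ is $\cE$.

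The hard part, and the only place that genuinely uses the tree-child hypothesis, is the claim in the forward direction that every vertex of $\cN$ is used, together with its mirror in the converse that every internal vertex keeps out-degree at least one: without a tree path, a tree vertex could have both children reticulations and be bypassed, destroying the correspondence between edge-selections and displayed trees. One technical point I would need to dispatch is the possibility that $\rho$ is left with out-degree one in $\cE$ (when its second child is a reticulation whose chosen incoming edge comes from elsewhere); this degenerate root is suppressed under the standard convention, and it is precisely the bookkeeping that is subsumed by \cite[Theorem~1.1]{semple16}, from which the lemma follows.
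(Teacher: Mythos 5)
The paper offers no argument of its own here: the lemma is stated as an immediate consequence of \cite[Theorem~1.1]{semple16}. Your self-contained proof is therefore a genuinely different route, and in substance it is the right one. The converse direction is sound as written: the in-degree count (every non-root vertex has in-degree exactly one in $\cE$, the root in-degree zero) together with acyclicity gives a spanning arborescence rooted at $\rho$, and tree-childness guarantees out-degree at least one at every internal vertex --- note that your remark that a reticulation's single out-edge is a tree edge itself uses tree-childness, since in a general network a reticulation's child could be another reticulation. You also correctly isolate where the tree-child hypothesis does all the work. What your route buys is that the paper becomes self-contained and the reader sees that the lemma is exactly Lemma~\ref{l:tree-path} in action; what the citation buys is that it silently inherits Semple's conventions about what an embedding is --- and that is where your one soft spot lies.

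In the forward direction you walk back along a tree path ``invoking that a used non-root vertex must be reached through its (unique) incoming edge''. That is true for non-root vertices \emph{of the embedding}, not of $\cN$: a priori the root of the embedding could be a vertex $r\neq\rho$, and your walk-back stalls there. Under the paper's literal definition of displaying (delete edges, then any resulting vertices of out-degree zero) this genuinely happens. Take $\cN$ with edges $(\rho,c_1)$, $(\rho,c_2)$, $(c_1,x)$, $(c_1,c_2)$, $(c_2,y)$: this is a binary tree-child network in which $(\rho,c_2)$ is a shortcut, and the edge set $\{(c_1,x),(c_1,c_2),(c_2,y)\}$ arises by deleting both root edges and then the out-degree-zero vertex $\rho$; after suppressing $c_2$ it is the cherry on $\{x,y\}$, so it qualifies as an embedding of a displayed tree yet misses the tree edge $(\rho,c_1)$. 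The lemma is thus only true under the convention that an embedding retains $\rho$ (equivalently, that the degenerate out-degree-one path at the root is kept in $\cE$ and suppressed only when reading off the tree) --- precisely the convention you invoke, but only for the converse. State it symmetrically for the forward direction and your argument closes completely without the appeal to \cite[Theorem~1.1]{semple16}: the embedding is then an arborescence rooted at $\rho$, every used vertex other than $\rho$ has in-degree one in $\cE$, and the induction along tree paths goes through exactly as you describe.
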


\noindent Let $\cN$ be a binary tree-child network on $X$, and let $\cE$ be an embedding of a phylogenetic $X$-tree that is displayed by $\cN$. Let $R$ be the subset of $\cE$ that contains precisely each edge of $\cE$ that is a reticulation edge in $\cN$. Since $\cE$ contains each tree edge of $\cN$ by Lemma~\ref{l:embedding}, it follows that $R$ contains enough information to determine $\cE$. In what follows, we freely use this fact and describe an embedding by the set of reticulation edges in $\cN$ that it contains instead of all edges. We say that $R$ {\it induces} $\cE$ if $\cE$ is the union of $R$ and all tree edges in $\cN$. 

\begin{theorem}\label{t:two}
Let $\cT$ and $\cT'$ be two binary phylogenetic $X$-trees.  Then $\cT$ and $\cT'$  are normal compatible. 
\end{theorem}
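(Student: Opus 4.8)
The plan is to argue by induction on $|X|$, constructing the network directly so that it is normal by construction. If $\cT=\cT'$ or $|X|\le 2$, then $\cT$ itself is a reticulation-free, hence normal, network displaying both trees; this is the base case. So assume $\cT\ne\cT'$ and $|X|\ge 3$, and let $\{a,b\}$ be a cherry of $\cT$, which exists since $\cT$ is binary with at least two leaves.

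First I would dispose of the easy case in which $\{a,b\}$ is also a cherry of $\cT'$. Here I delete $a$ from both trees, apply induction to the binary $(X-\{a\})$-trees $\cT\backslash a$ and $\cT'\backslash a$ to obtain a normal network $\cN_1$ displaying both, and then reinsert $a$ by subdividing the edge of $\cN_1$ directed into $b$ with a new vertex $p_b$ and attaching $a$ to $p_b$ via a tree edge. The resulting network $\cN$ displays $\cT$ and $\cT'$ (extend each embedding of $\cN_1$ by the recreated cherry $\{a,b\}$), introduces no reticulation, and pendant-attaching a cherry preserves both the tree-child property and the absence of shortcuts; hence $\cN$ is normal.

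The substantive case is when $\cT$ and $\cT'$ share no common cherry. I again delete a cherry leaf $a$ of $\cT$ and obtain, by induction, a normal network $\cN_1$ on $X-\{a\}$ displaying $\cT\backslash a$ and $\cT'\backslash a$, tracking the two embeddings through the recursion via Lemma~\ref{l:embedding} and the fact that each embedding is determined by its set of reticulation edges. To recover $\cT$, the leaf $a$ must be reinstated as a sibling of $b$, so one parent of the reinserted vertex $p_a$ should sit on the edge of $\cN_1$ directed into $b$ in the embedding of $\cT\backslash a$; call this subdivision point $s$. To recover $\cT'$, the leaf $a$ must be reattached along the image, under the embedding of $\cT'\backslash a$, of the edge of $\cT'\backslash a$ created when $a$ was suppressed; call a subdivision point on this path $t$. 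I then form $\cN$ by adding the reticulation $p_a$ with reticulation edges $(s,p_a)$ and $(t,p_a)$ together with the pendant leaf $a$. Since the child of each of $s$, $t$, and $p_a$ begins a tree path in $\cN$ (the leaf $a$ for $p_a$, and a tree path of $\cN_1$ below the subdivided edges for $s$ and $t$), the network $\cN$ is tree-child; moreover, as the only out-neighbour of $p_a$ is the leaf $a$, adding the gadget creates no new directed path into any old reticulation, so the only reticulation edges that could become shortcuts are $(s,p_a)$ and $(t,p_a)$, which happens exactly when $s$ and $t$ are comparable in $\cN_1$. Thus $\cN$ is normal if and only if $s$ and $t$ can be chosen incomparable.

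The main obstacle is precisely this last requirement: showing that, possibly after swapping the roles of $a$ and $b$ or interchanging $\cT$ and $\cT'$, one can always select the cherry leaf to delete and the attachment point $t$ so that $s$ and $t$ are incomparable in $\cN_1$. I expect to prove this by a case analysis on how $a$ and $b$ are positioned in the tree whose cherry was not used --- in particular on the location of $\mrca(a,b)$ and of the sibling subtrees of $a$ and $b$ --- using the freedom to slide $t$ anywhere along the reattachment path and the fact that, with only two trees to satisfy, the embeddings of $\cT\backslash a$ and $\cT'\backslash a$ constrain $\cN_1$ only locally. It is exactly this step that uses the two-tree hypothesis: the stated negative result for three or more trees shows that an incomparable choice need not exist once a third embedding must be respected simultaneously.
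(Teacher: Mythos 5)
There is a genuine gap, and you have located it yourself: the claim that, after swapping the roles of $a$ and $b$ or of $\cT$ and $\cT'$, the attachment points $s$ and $t$ can always be chosen incomparable in $\cN_1$ is exactly where the theorem's difficulty lives, and your proposal only announces an intention to prove it by case analysis. The trouble is that the plain induction hypothesis is too weak to support such an analysis: it hands you \emph{some} normal network $\cN_1$ on $X-\{a\}$ with no structural control, and ancestry in $\cN_1$ is richer than ancestry in the two displayed trees because of reticulation-mediated paths. Concretely, if $b$ lies in the sibling subtree of $a$ in $\cT'$, every vertex of the reattachment path is an ancestor of $b$ in the embedding, so no sliding of $t$ helps; and for the remaining discrete freedoms (delete $b$ instead of $a$, or exchange the trees) you prove no invariant guaranteeing one of them succeeds --- an adversarially chosen $\cN_1$ can make vertices ancestors of the relevant leaf through reticulation edges even when they are not ancestors in $\cT'$. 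The caterpillars $\cT=(\ell_1,\ell_2,\ldots,\ell_n)$ and $\cT'=(\ell_2,\ldots,\ell_n,\ell_1)$ of Section~5 already illustrate the phenomenon: deleting $\ell_1$ forces $t$ above the root of $\cN_1$, comparable with everything, so the construction only survives by deleting $\ell_2$ instead; that this kind of escape is always available is precisely what must be proved, and the fact that $h_{\rm n}=n-2$ for these trees shows the inductive step must sometimes be forced into creating a reticulation at every level, leaving little slack. A smaller but real secondary gap: if the reattachment path for $\cT'$ consists of a single reticulation edge of $\cN_1$, then subdividing it with $t$ gives $t$ two reticulation children ($p_a$ and the old reticulation), violating the tree-child property, and your ``slide $t$'' freedom is empty there.

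For comparison, the paper does not induct on $|X|$ at all. It starts from the known fact that a binary \emph{tree-child} network displaying $\cT$ and $\cT'$ exists (\cite[Corollary~1.4]{linz19}), takes one minimising lexicographically the pair (number of shortcuts, length of a shortest shortcut), and derives a contradiction from the existence of a shortcut by local surgery: rerouting a reticulation edge used by one embedding, or restructuring around the vertex $u_i$ on the path witnessing the shortcut, always producing a tree-child network displaying both trees with fewer shortcuts or a strictly shorter shortest shortcut. This descent argument is exactly what substitutes for the missing incomparability lemma in your approach: rather than proving that a good attachment point always exists in an uncontrolled recursive network, it shows that any bad configuration in a concrete ambient network can be improved. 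If you want to rescue your induction, you would likely need to strengthen the inductive hypothesis (constructing $\cN_1$ with explicit extra properties about the positions of $b$ and the reattachment path), which in effect re-imports the case analysis you deferred.
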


\begin{proof}
Towards a contradiction, assume that there exist two binary phylogenetic $X$-trees $\cT$ and $\cT'$ that are not normal compatible. By~\cite[Corollary 1.4]{linz19}, there exists a binary tree-child network on $X$ that displays $\cT$ and $\cT'$.  Let $\cN$ be a binary tree-child network that displays $\cT$ and $\cT'$ with the minimum number of shortcuts amongst binary tree-child networks on $X$ that display $\cT$ and $\cT'$ and whose length of a shortest shortcut is minimised over all binary tree-child networks on $X$ that display $\cT$ and $\cT'$ with the minimum number of shortcuts. 

Let $k$ be the number of shortcuts in $\cN$, and let $l$ be the minimum length of a shortcut in $\cN$. Furthermore, let $\cE$ and $\cE'$ be embeddings of $\cT$ and $\cT'$, respectively, in $\cN$. Throughout the remainder of the proof, we freely assume that each reticulation edge of $\cN$ is used by exactly one of $\cE$ or $\cE'$ because, if that is not the case, then there exists a tree-child network on $X$ that displays $\cT$ and $\cT'$ with less than $h(\cN)$ reticulations, at most $k$ shortcuts, and whose length of a minimum shortcut is at most $l$. Let $v$ be a reticulation in $\cN$ with parents $u$ and $u'$ such that $(u,v)$ is a shortcut of length $l$. Without loss of generality, we may assume that $\cE$ uses $(u,v)$ and $\cE'$ uses $(u',v)$. Let $R$ (resp. $R'$) be the subset of the reticulation edges in $\cN$ that are used by $\cE$ (resp. $\cE'$).

In the following, we construct a tree-child network $\cN'$ on $X$ from $\cN$ that displays $\cT$ and $\cT'$ and has either less than $k$ shortcuts, or $k$ shortcuts one of which with length less than $l$, thereby deriving a contradiction in both cases. If $l=2$, then $(u,u')$ is an edge in $\cN$. It follows that $(R-\{(u,v)\})\cup\{(u',v)\}$ induces an embedding of $\cT$ in $\cN$ that does not use $(u,v)$. Hence, the binary tree-child network $\cN'$ obtained from $\cN$ by deleting $(u,v)$ and suppressing $u$ and $v$ displays $\cT$ and $\cT'$ and has $k-1$ shortcuts, a contradiction. We therefore assume that $l>2$. Let $u=u_1, u_2,\ldots, u_m=u'$ be a directed path $P$ from $u$ to $u'$. Since $(u,v)$ is a shortcut of minimum length, each tree vertex on $P$ has a child that does not lie on $P$. Let $i$ be the maximum element in $\{1,2,\ldots,m-1\}$ such that  $u_i$ is a reticulation, $u_i$ is a tree vertex whose child that does not lie on $P$ is a tree vertex or a leaf, or $u_i=u_1$. Observe that $u_i$ is a vertex of each directed path from $u$ to $u'$ and, for each $j\in\{i+1,i+2,\ldots,m-1\}$, the child of $u_j$ that does not lie on $P$ is a reticulation,  $w_j$ say. The setup is illustrated in Figure~\ref{fig:two-tree-proof}(i). Assume that there exists a $j\in\{i+1,i+2,\ldots,m-1\}$ such that $(u_j,w_j)$ is used by $\cE$. Choose $j$ so that no reticulation edge in $\{(u_{j+1},w_{j+1}),(u_{j+2},w_{j+2}),\ldots,(u_{m-1},w_{m-1})\}$ is used by $\cE$. Let $q$ be the child of $u'$ that is not $v$. Now obtain $\cN'$ from $\cN$ by deleting $(u_j,w_j)$, suppressing $u_j$, subdividing $(u',q)$ with a new vertex $u_j'$, and adding the edge $(u_j',w_j)$. Figure~\ref{fig:two-tree-proof}(i)--(ii) shows the construction of $\cN'$ from $\cN$. It is straightforward to check that, as $\cN$ is a binary tree-child network, $\cN'$ is also such a network. Let $u_j''$ be the parent of $w_j$ in $\cN$ (resp. $\cN'$) that is not $u_j$ (resp. $u_j'$). If $(u''_j,w_j)$ is a shortcut in $\cN'$, then $u_j''$ is an ancestor of $u_j'$ and, in turn, an ancestor of $u_i$. It follows that  $(u_j'',w_j)$ is a shortcut in $\cN$. On the other hand, if  $(u_j',w_j)$ is a shortcut in $\cN'$, then $u_j''$ is a descendant of $q$, thereby implying that $(u_j,w_j)$ is also a shortcut in $\cN$. Thus, $\cN'$ has at most $k$ shortcuts. To see that $\cN'$ displays $\cT$ and $\cT'$, observe that $R'$ induces an embedding of $\cT'$ in $\cN'$ and, because no reticulation edge in $\{(u_{j+1},w_{j+1}),(u_{j+2},w_{j+2}),\ldots,(u_{m-1},w_{m-1})\}$ in an edge of $R$, it follows that
$$(R-\{(u_j,w_j)\})\cup\{(u_j',w_j)\}$$
induces an embedding of $\cT$ in $\cN'$. As $u_j$ does not lie on a directed path from $u$ to $u'$ in $\cN'$, $(u,v)$ has length $l-1$ in $\cN'$, we derive a contradiction. We continue with the proof under the following assumption.

\noindent {\bf (A)} For each  $j\in\{i+1,i+2,\ldots,m-1\}$, the edge $(u_j,w_j)$ is used by $\cE'$. 

\begin{figure}[h!]
\center
\scalebox{0.92}{\input{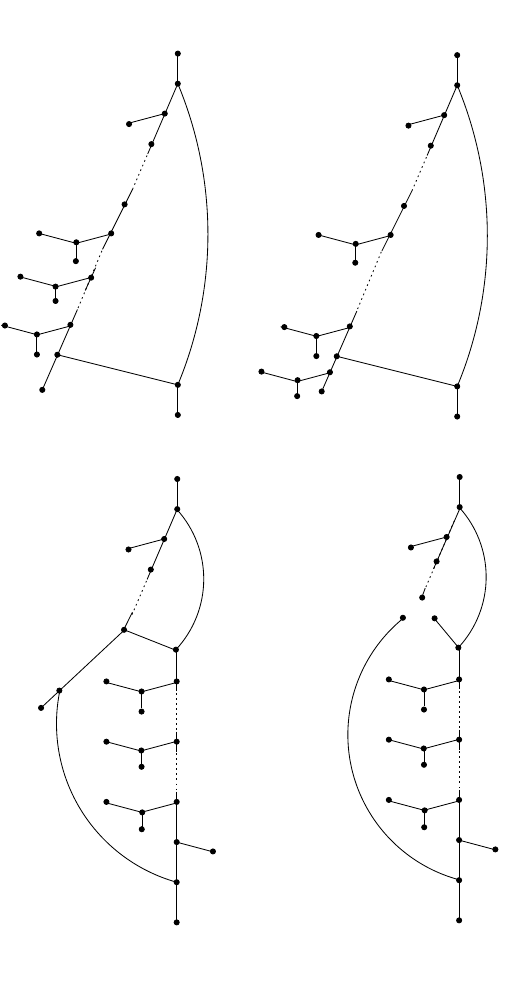_t}}
\caption{(i) The tree-child network $\cN$ with a shortcut $(u,v)$ of length $l$ with $l\geq m$ as described in the proof of Theorem~\ref{t:two}. The tree-child network $\cN'$ obtained from $\cN$ with a shortcut $(u,v)$ of length strictly less than $l$ for when (ii) there exists a reticulation edge $(u_j,w_j)$ with $j\in\{i+1,i+2,\ldots,m-1\}$ that is used by $\cE$ and no such reticulation edge exists and $u_i$ is a tree vertex (iii) or a reticulation (iv). In (iv), one of $g$ or $g'$ is $u_{i-1}$. Some parts of $\cN$ and $\cN'$ are omitted.}
\label{fig:two-tree-proof}
\end{figure}

\noindent Let $s$ be the parent of $u$ in $\cN$, and let $t$ be the child of $v$ in $\cN$. We next consider three cases. 

First, assume that $u_i=u_1$. Then, for each $j\in\{2,3,\ldots,u_{m-1}\}$, the edge $(u_j,w_j)$ is only used by $\cE'$.  Similar to the construction in the last paragraph, obtain $\cN'$ from $\cN$ by deleting $(u_2,w_2)$, suppressing $u_2$, subdividing $(s,u)$ with a new vertex $u_2'$, and adding the edge $(u_2',w_2)$. As $\cN$ is a binary tree-child network with $k$ shortcuts, $\cN'$ is such a network as well because $w_2$ is incident with a shortcut  in $\cN'$ precisely if $w_2$ is  incident with a shortcut in $\cN$. Moreover, $R$ induces an embedding of $\cT$ in $\cN'$ and  $$(R'-\{(u_2,w_2)\})\cup\{(u_2',w_2)\}$$ induces an embedding of $\cT'$ in $\cN'$. Lastly, since $u_2$ does not lie on any directed path from $u$ to $u'$ in $\cN'$, the length of $(u,v)$ is $l-1$ in $\cN'$, another contradiction.

Second, assume that $u_i\ne u_1$ and that $u_i$ is a tree vertex. We next obtain a binary phylogenetic network $\cN'$ on $X$ from $\cN$ by applying the following edge deletions, subdivisions, and additions. Intuitively, these operations turn $u_i$ into a parent of $v$ and move the subpath of $P$ from $u_{i+1}$ to $u_m$, which contains at least $u_m$, below $v$, thereby turning $u_m$ into an additional reticulation and shortening the shortcut $(u,v)$. More precisely, obtain $\cN'$ from $\cN$ by 
\begin{enumerate}[(i)]
\item deleting the edges $(u_m,v)$, $(u_i,u_{i+1})$, and $(v,t)$,
\item subdividing the tree edge that is directed out of $u_i$ and not incident with $u_{i+1}$ with a new vertex $p$, and the edge  $(u_{m-1}, u_m)$ with a new vertex $p'$, and
\item adding the edges $(u_i,v)$, $(p,u_m)$, $(v,u_{i+1})$, and $(p',t)$.
\end{enumerate}

\noindent The construction of $\cN'$ from $\cN$ is shown in Figure~\ref{fig:two-tree-proof}(i) and (iii). Let $\ell$ be a leaf at the end of a tree path that starts at $v$ in $\cN$. Since the child of $v$ in $\cN'$ is either $u_{i+1}$ or $p'$, which are both tree vertices, and each vertex in $\{u_{i+1},u_{i+2},\ldots,u_{m-1},p'\}$ has a tree path that starts at that vertex and ends at $\ell$ in $\cN'$, it is straightforward to check that, as $\cN$ is tree-child, $\cN'$ is also tree-child. Moreover, $$R\cup\{(p,u_m)\} \text{ and } (R'-\{(u_m,v)\})\cup\{(u_i,v),(p',u_m)\}$$ induce an embedding of $\cT$ and $\cT'$, respectively in $\cN'$. Lastly, we turn to shortcuts in $\cN'$. It follows from the construction that $u_m$ is a reticulation in $\cN'$ with parents $p$ and $p'$. Since $p$ is neither an ancestor nor a descendant of $p'$, $u_m$ is not incident with a shortcut in $\cN'$. For each $j\in \{i+1,i+2,\ldots,m-1\}$,  let $u_j'$ be the parent of $w_j$ that is not $u_j$ in $\cN$. Observe that $u_j$ and $u_j'$ are also the parents of $w_j$ in $\cN$. Furthermore, by Assumption (A), it follows that $u_j'\notin\{u_{i+1},u_{i+2},\ldots u_m\}$. Hence, if $(u_{j},w_j)$ (resp. $(u_j',w_j)$) is a shortcut in $\cN'$, then $(u_{j},w_j)$ (resp. $(u_j',w_j)$) is a shortcut in $\cN$. Thus $\cN'$ has at most $k$ shortcuts and, importantly, the shortcut $(u,v)$ has length at most $l-1$ because $u_1$ and $u_i$ are the two parents of $v$ in $\cN'$ and  $u_m$ does not lie on a directed path from $u_1$ to $u_i$, another contradiction.

Third assume that $u_i\ne u_1$ and $u_i$ is a reticulation.  Let $g$ and $g'$ be the two parents of $u_i$. At least one of $g$ and $g'$ lies on $P$. Without loss of generality, we may assume that $\cE$ uses $(g,u_i)$ and  $\cE'$ uses $(g',u_i)$. We next obtain a binary phylogenetic network $\cN'$ on $X$ from $\cN$ in which $u_m$ is a reticulation, $u_i$ is not a vertex, $g$ is a parent of $u_m$, and $g'$ is a parent of $v$.  More precisely, in order, obtain $\cN'$ from $\cN$ by \begin{enumerate}[(i)]
\item deleting the edges $(u_m,v)$ and $(v,t)$,  
\item deleting the vertex $u_i$,
\item adding the edges $(g',v)$, $(g,u_m)$, and $(v,u_{i+1})$,
\item subdividing the edge that is directed into $u_m$ and not incident with $g$ with a new vertex $p'$, and
\item  adding the edge $(p',t)$.
\end{enumerate}

\noindent The construction of $\cN'$ from $\cN$ is shown in Figure~\ref{fig:two-tree-proof}(i) and (iv). As $\cN$ is tree-child, note that $g$ and $g'$ are tree vertices in $\cN$ and $\cN'$. We can now apply the same argument as in the second case to show that $\cN'$ is tree-child. Moreover 
$$(R-\{(g,u_i)\})\cup\{(g,u_m)\}\text{ and }(R'-\{(g',u_i),(u_m,v)\})\cup\{(g',v),(p',u_m)\}$$
induce an embedding of $\cT$ and $\cT'$, respectively, in $\cN'$. Turning to shortcuts in $\cN'$, assume that $(g,u_m)$ is a shortcut. Then $g$ is an ancestor of $p'$ in $\cN'$. By construction and Assumption (A), this in turn implies that $g$ is an ancestor of $g'$ in $\cN'$ and $\cN$. Thus $(g,u_i)$ is a shortcut in $\cN$. Now assume that $(p',u_m)$ is  a shortcut in $\cN'$. Then $g$ is a descendant of $t$ in $\cN'$ and in $\cN$. But $t$ is also a descendant of $g$ in $\cN$ because there is a directed path from $u_i$, a child of $g$, to $v$. Thus, $\cN$ contains a directed cycle, thereby implying that  $(p',u_m)$ is not a shortcut in $\cN'$. It  follows that, if $u_m$ is  incident with a shortcut in $\cN'$, then $u_i$ is incident with a shortcut in $\cN$. Again, for each $j\in\{i+1,i+2,\ldots,m-1\}$, let $u_j'$ be the parent of $w_j$ that \sout{does not lie on $P$} is not $u_j$ in $\cN$. Applying the same argument as in the second case, we deduce that, if $(u_
{j},w_j)$ (resp. $(u_j',w_j)$) is a shortcut in $\cN'$, then $(u_{j},w_j)$ (resp. $(u_j',w_j)$) is a shortcut in $\cN$. It remains to show that $v$ is either incident with no shortcut or with a shortcut of length at most $l-1$ in $\cN'$. Assume that $g'$ does not lie on any directed path from $u$ to $u'$ in $\cN$. Then there is no directed path from $u$ to $g'$ in $\cN'$, thereby implying that $(u,v)$ is not a shortcut in $\cN'$. Hence, if $(u,v)$ is a shortcut in $\cN'$, then $g'$ and $u_i$ both lie on a directed path from $u$ to $u'$ in $\cN$, whereas $u_i$ does not lie on a directed path from $u$ to $g'$ in $\cN'$. Hence, $(u,v)$ has length at most $l-1$. In summary, it  follows that $\cN'$ has at most $k$ shortcuts. Moreover, if $\cN'$ has $k$ shortcuts, then $(u,v)$ is one of them and has length at most $l-1$, a final contradiction. The theorem now follows.
\end{proof}

Observe that the process of obtaining $\cN'$ from $\cN$ as described in the proof of Theorem~\ref{t:two} can be repeatedly applied to reduce the length of a shortcut to two, at which point it can be removed. For two binary phylogenetic trees $X$-trees $\cT$ and $\cT'$, this gives an algorithmic approach to construct a binary normal network that displays $\cT$ and $\cT'$. More specifically, start with a binary tree-child network $\cN$ that displays $\cT$ and $\cT'$ and then keep applying the process of removing or shortening a shortcut to each shortcut until the resulting network becomes normal.

The next corollary is an immediate consequence of Theorem~\ref{t:two} because, if  a binary refinement of a phylogenetic tree is displayed by a phylogenetic network, then the (unrefined) tree is also displayed by the same network.

\begin{corollary}\label{c:two}
Let $\cT$ and $\cT'$ be two  phylogenetic $X$-trees. Then $\cT$ and $\cT'$ are normal compatible.
\end{corollary}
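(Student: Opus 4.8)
The plan is to deduce the corollary directly from Theorem~\ref{t:two} by passing to binary refinements. Given arbitrary phylogenetic $X$-trees $\cT$ and $\cT'$, I would first fix a binary refinement $\cT_b$ of $\cT$ and a binary refinement $\cT'_b$ of $\cT'$; such refinements always exist, since any multifurcation can be resolved into a binary tree by repeatedly introducing internal edges. Both $\cT_b$ and $\cT'_b$ are binary phylogenetic $X$-trees, so Theorem~\ref{t:two} applies and yields a binary normal network $\cN$ on $X$ that displays both $\cT_b$ and $\cT'_b$.

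The remaining step is to observe that a network displaying a binary refinement of a tree also displays the (unrefined) tree. This is immediate from the definition of \emph{displaying}: $\cN$ displays $\cT$ precisely when, up to suppressing degree-two vertices, some binary refinement of $\cT$ can be obtained from $\cN$ by deleting edges, leaves not in the leaf set, and resulting out-degree-zero vertices. Since $\cN$ displays $\cT_b$ and $\cT_b$ is itself binary, $\cT_b$ embeds directly in $\cN$; and $\cT_b$ is a binary refinement of $\cT$, so this embedding witnesses that $\cN$ displays $\cT$. The identical argument with $\cT'_b$ shows that $\cN$ displays $\cT'$.

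Hence $\cN$ is a normal network on $X$ that displays both $\cT$ and $\cT'$, so $\cT$ and $\cT'$ are normal compatible. I expect no genuine obstacle here: the only point requiring a moment's care is the bookkeeping with the definition of displaying, where one checks that the embedding of $\cT_b$ in $\cN$ is literally an embedding of a binary refinement of $\cT$, which is exactly the condition for $\cN$ to display $\cT$. All of the structural work has already been done in Theorem~\ref{t:two}, and the corollary is essentially a translation of that result through the refinement layer.
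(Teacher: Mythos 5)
Your proposal is correct and matches the paper's own argument exactly: the paper derives Corollary~\ref{c:two} from Theorem~\ref{t:two} by the same observation, namely that if a binary refinement of a phylogenetic tree is displayed by a network, then the unrefined tree is displayed by that network as well. Your only addition is spelling out the definitional bookkeeping, which the paper leaves implicit.
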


Since each binary normal network on $X$ has at most $|X|-2$ reticulations~\cite{bickner12,mcdiarmid15} and a display set of size $2^k$, where $k$ is the number of reticulations in $\cN$ and each such vertex has in-degree two, it immediately follows that Theorem~\ref{t:two} does not generalise to more than two phylogenetic $X$-trees. To see this, consider the three binary phylogenetic trees with leaf set $\{a,b,c\}$. Each phylogenetic network $\cN$ that displays these trees with a single reticulation $v$ has the property that the in-degree of $v$ is three and, regardless of which leaf is the child of $v$, it is straightforward to check that $\cN$ has a shortcut. Moreover, there exists no normal network on three leaves with two reticulations. Hence, the three binary phylogenetic trees with leaf set $\{a,b,c\}$ are not normal compatible.

\section{Sets of phylogenetic trees displayed by a normal network}\label{sec:multi-normal}

In this section, we characterise $h_{\rm n}(\cP)$ for a collection $\cP$ of phylogenetic $X$-trees in terms of a particular type of sequence on the elements in $X$ called a ``normal cherry-picking sequence''. Let $\cT$ be a phylogenetic $X$-tree and let $(x, y)$ be an ordered pair of leaves in $X$. If $(x, y)$ is a cherry of $\cT$, then let $\cT'=\cT\backslash x$; otherwise, let $\cT'=\cT$. We say that $\cT'$ has been obtained from $\cT$ by {\em cherry picking $(x, y)$}. 

Let $\sigma=(x_1, y_1), (x_2, y_2), \ldots, (x_s, y_s)$ be a sequence of ordered pairs in $X\times X$. We refer to $\sigma$ as a {\it shortcut sequence} if there are indices $i_1<i_2<\cdots<i_m$ with $i_1=1$ and $i_m=s$ such that either $m=2$, $x_{i_1}=x_{i_2},$ and $y_{i_1}=y_{i_2}$, or $m>2$ and the following four properties are satisfied:

\begin{enumerate}
\item $x_{i_1}=x_{i_m}$,
\item for each $j\in\{2,3,\ldots,m-1\}$, we have $x_{i_1}\notin\{x_{i_j},y_{i_j}\}$, 
\item the elements $x_{i_2}, x_{i_3},\ldots,x_{i_{m-1}}$ are distinct, and 
\item for each $j\in\{2, 3, \ldots, m\}$, the intersection $\{x_{i_{j-1}},y_{i_{j-1}}\}\cap\{x_{i_{j}},y_{i_{j}}\}$ is non-empty. 
\end{enumerate}

\noindent If $\sigma$ is a shortcut sequence, then we refer to $(x_{i_1},y_{i_1}),(x_{i_2},y_{i_2}),\ldots,(x_{i_m},y_{i_m})$ as the subsequence of $\sigma$ that {\it verifies} $\sigma$.

Now, let $\cP$ be a set of phylogenetic $X$-trees, and let
$$\sigma=(x_1, y_1), (x_2, y_2), \ldots, (x_s, y_s), (x_{s+1}, -)$$
be a sequence of ordered pairs in $X\times (X\cup \{-\})$ such that the following  property is satisfied.

\noindent {\bf (TC)} For all $i\in \{1, 2, \ldots, s\}$, we have $x_i\not\in \{y_{i+1}, y_{i+2}, \ldots, y_s\}$.

\noindent  Setting $\cP_0=\cP$ and, for all $i\in \{1, 2, \ldots, s\}$, setting $\cP_i$ to be the set of phylogenetic trees obtained from $\cP_{i-1}$ by cherry picking $(x_i, y_i)$ in each tree in $\cP_{i-1}$, we call $\sigma$ a {\em tree-child cherry-picking sequence} for $\cP$ if each tree in $\cP_s$ consists of the single vertex $x_{s+1}$. Furthermore, for all $i\in \{1, 2, \ldots,s\}$, we say that $\cP_i$ is obtained from $\cP$ by {\it picking $x_1, x_2, \ldots, x_{i}$}.  Additionally, if $\cP_i\ne\cP_{i+1}$, then we refer to $(x_i,y_i)$ as being {\it essential.}  Let $\sigma$ be a tree-child cherry-picking sequence for $\cP$, then the {\it weight} of $\sigma$, denoted $w(\sigma)$, is the value $s+1-|X|$. Observe that, 
$$w(\sigma)=s+1-|X|\ge 0$$
as each element in $X$ must appear as the first element in an ordered pair in $\sigma$.

Let $\sigma=(x_1, y_1), (x_2, y_2), \ldots, (x_s, y_s), (x_{s+1}, -)$ be a tree-child cherry-picking sequence for a collection $\cP$ of phylogenetic $X$-trees. For two elements $i,j\in\{1,2,\ldots, s+1\}$, we denote by $\sigma[i,j]$ the substring of $\sigma$ that starts at the $i$th ordered pair and ends at the $j$th ordered pair.  We say that $\sigma$ is a {\it normal cherry-picking sequence} for $\cP$ if it additionally satisfies the following property.

\noindent {\bf (N)} For each pair $(x_i,y_i)$ and $(x_j,y_j)$ with $i<j$, the substring $\sigma[i, j]$ is not a shortcut sequence. 

\noindent To illustrate these notions,  
$$\sigma=(\ell_1,\ell_2),(\ell_2,\ell_3),(\ell_3,\ell_4),(\ell_1,\ell_4),(\ell_4,-)\text{ and }$$ $$\sigma'=(\ell_2,\ell_1),(\ell_2,\ell_3),(\ell_3,\ell_4),(\ell_3,\ell_1),(\ell_1,\ell_4),(\ell_4,-)$$ are two tree-child cherry-picking sequences for the two caterpillars $\cT$ and $\cT'$ with $n=4$ that are shown in Figure~\ref{fig:caterpillars}. Since $\sigma[1, 4]$ is a shortcut sequence, $\sigma$ is not a normal cherry-picking sequence whereas $\sigma'$ is such a sequence. 
Most sequences of ordered pairs of elements in $X\times (X\cup \{-\})$ that we consider in this paper are normal cherry-picking sequences and the more general  tree-child cherry-picking sequences are only needed for technical reasons. We refer to (TC) as the {\it tree-child property} and to (N) as the {\it normal property}. Observe that checking if an arbitrary sequence of ordered pairs in $X\times (X\cup \{-\})$ satisfies these properties does not require any information about the elements in $\cP$. 

Now, let $\sigma$ be a normal cherry-picking sequence for a collection  $\cP$ of phylogenetic $X$-trees. We call $\sigma$ a {\it minimum normal cherry-picking sequence} of $\cP$ if $w(\sigma)$ is minimised over all normal cherry-picking sequences of $\cP$. This smallest value is denoted by $s(\cP)$. 

\begin{lemma}\label{l:three-properties}
Let $\cP$ be a set of  phylogenetic $X$-trees. Let $\sigma$ be a tree-child cherry-picking sequence for $\cP$. Then there exists a tree-child network $\cN$ on $X$ that displays $\cP$ with $h(\cN)\le w(\sigma)$ and satisfies the following four properties:
\begin{enumerate}[{\rm (i)}]
\item If $u$ is a tree vertex in $\cN$ and not a parent of a reticulation, then there are leaves $\ell_1$ and $\ell_1'$ at the end of tree paths starting at the two children  of $u$, such that $(\ell_1, \ell_1')$ is an element in $\sigma$.
           
\item If $u$ is a tree vertex in $\cN$ that has a child that is a reticulation $v$, then there are leaves $\ell_u$ and $\ell_v$ at the end of tree paths starting at $u$ and $v$, respectively, such that $(\ell_v, \ell_u)$ is an element in $\sigma$.

\item If $u$ and $u'$ are tree vertices and $v$ a reticulation of $\cN$ such that $v$ is a child of $u$ and $u'$, and $u'$ is the parent of $u$, then there are leaves $\ell_1$ and $\ell_1'$  at the end of tree paths starting at the two children of $u$ such that $(\ell_1,\ell_1'),(\ell_1,\ell_1')$ is a subsequence of $\sigma$.

\item Let $u$ and $u'$ be tree vertices of $\cN$ such that either $(u,v)$ and $(v,u')$ are the edges of a path of length two and $v$ is a reticulation, or $(u,u')$ is an edge and no reticulation of $\cN$ is a child of $u$ and $u'$. If $\ell_1$ and $\ell_1'$ are leaves  at the end of tree paths starting at the two children of $u'$ such that $(\ell_1,\ell_1')$ is an ordered pair in $\sigma$, then there exist leaves $\ell_2$ and $\ell_2'$ at the end of tree paths starting at the two children of $u$ such that   $(\ell_1,\ell_1'),(\ell_2,\ell_2')$ is a subsequence of $\sigma$, $\{\ell_1,\ell_1'\}\cap\{\ell_2,\ell_2'\}\ne\emptyset$ and $\ell_1\ne\ell_2$.
\end{enumerate}
\end{lemma}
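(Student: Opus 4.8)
The plan is to build $\cN$ directly from $\sigma$ by induction on its length $s+1$, removing the first ordered pair $(x_1,y_1)$. Set $\cP_1$ to be the result of cherry picking $(x_1,y_1)$ in every tree of $\cP$ and $\sigma'=(x_2,y_2),\dots,(x_{s+1},-)$; one checks immediately that $\sigma'$ is a tree-child cherry-picking sequence for $\cP_1$, since (TC) is inherited. By induction there is a tree-child network $\cN_1$ on the leaf set of $\cP_1$ that displays $\cP_1$, has $h(\cN_1)\le w(\sigma')$, and satisfies (i)--(iv) relative to $\sigma'$. If $(x_1,y_1)$ is a cherry of every tree of $\cP$, so that $x_1$ is not a leaf of $\cP_1$, I would form $\cN$ by subdividing the edge into $y_1$ with a new tree vertex and appending the leaf $x_1$, creating the cherry $(x_1,y_1)$; here $w(\sigma')=w(\sigma)$ and no reticulation is added. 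Otherwise $x_1$ is still a leaf of $\cP_1$, and I would subdivide the edges into $x_1$ and into $y_1$ with new vertices $p_{x_1},p_{y_1}$ and add the reticulation edge $(p_{y_1},p_{x_1})$, turning $(x_1,y_1)$ into a reticulated cherry with reticulation leaf $x_1$; here $w(\sigma)=w(\sigma')+1$ and at most one reticulation is added. In both cases $h(\cN)\le w(\sigma)$.

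Two verifications are routine. For tree-childness I would use Lemma~\ref{l:tree-path}: in the cherry case the appended leaf $x_1$ gives a tree path, and in the reticulated-cherry case $p_{x_1}$ has the tree path $p_{x_1}\to x_1$ while $p_{y_1}$ and the former parent of $x_1$ retain tree paths through their other, unchanged, children. That $\cN$ displays $\cP$ follows by extending each embedding of a tree of $\cP_1$ in $\cN_1$ using Lemma~\ref{l:embedding}: in the cherry case by placing $x_1$ beside $y_1$, and in the reticulated-cherry case by choosing, tree by tree, whether the embedding uses $(p_{y_1},p_{x_1})$ (recovering the trees in which $(x_1,y_1)$ was a cherry) or the other edge into $p_{x_1}$ (recovering the remaining trees).

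The substance of the argument is the propagation of (i)--(iv). The key point is that making $x_1$ a reticulation leaf destroys every tree path ending at $x_1$ except the single edge $p_{x_1}\to x_1$, so precisely those inherited vertices whose witness used $x_1$ as a tree-path endpoint must be reanalysed, while all other inherited witnesses survive verbatim because subdividing edges elsewhere does not move tree-path endpoints. For the reanalysed vertices, and in particular for the former parent $g_{x_1}$ of $x_1$, whose $x_1$-side child has become $p_{x_1}$, the orientation demanded by (ii) is supplied by property (TC) for $i=1$: it gives $x_1\notin\{y_1,\dots,y_s\}$, so $x_1$ occurs in $\sigma$ only as a first coordinate, which is exactly the order $(\ell_v,\ell_u)=(x_1,\ell_u)$ that (ii) requires. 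The new tree vertex (the cherry vertex, or $p_{y_1}$) is witnessed by the front pair $(x_1,y_1)$, giving (i) and (ii) there. For (iii) I would observe that a triangle $u'\to u\to v$ with $v$ a reticulation child of both is created exactly when $x_1$ and $y_1$ already form a cherry in $\cN_1$; then $(x_1,y_1)\in\sigma'$ by (i) applied in $\cN_1$, and since $(x_1,y_1)$ is also the front pair, $(x_1,y_1),(x_1,y_1)$ is a subsequence of $\sigma$, which is the repeated pair that (iii) asks for. For (iv) I would append the witness of the newly created tree vertex to the subsequence obtained inductively along the tree-vertex spine, reading the shared-endpoint and $\ell_1\ne\ell_2$ conditions off the local cherry or reticulated-cherry structure.

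I expect the main obstacle to lie in the reticulated-cherry case. First, the insertion of $p_{x_1}$ must preserve tree-childness even when the former parent of $x_1$ is already incident with a reticulation, which can force attaching the new reticulation higher on the $x_1$-lineage (or inserting an auxiliary tree vertex) rather than immediately above $x_1$, and one must check that displaying and the hybridisation bound are unaffected by this choice. Second, the bookkeeping in (ii)--(iv) is delicate: one must match each consecutive-subsequence and coordinate-order requirement to the structure just created, and property (TC) is the essential device that fixes the orientation of every witness pair so that the reticulation leaf consistently appears as the first coordinate.
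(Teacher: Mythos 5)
Your route is genuinely different from the paper's, which does not construct $\cN$ at all: the paper obtains properties (i) and (ii) directly from \cite[Lemma~3.1]{linz19}, derives (iii) by applying (ii) twice, and gets (iv) by a ``straightforward extension'' of the cited proof. Your blind inductive reconstruction of the underlying network-building argument is a legitimate alternative in principle, and several of your local observations are exactly right (in particular, that (TC) forces $x_1$ to occur only as a first coordinate, which supplies the orientation $(\ell_v,\ell_u)=(x_1,\ell_u)$ needed for (ii) at the former parent of $x_1$, and your analysis of when a triangle is created for (iii)). But as written the construction has a genuine gap at the step you yourself flagged, and your proposed patches do not repair it.

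The failure is this: in the reticulated-cherry case, the parent of $x_1$ in $\cN_1$ may already be a reticulation $r$ --- this happens whenever $x_1$ occurs as a first coordinate more than once in $\sigma'$, e.g.\ already for three caterpillars picked as $(x_1,a),(x_1,b),(x_1,c),\ldots$ --- and then subdividing $(r,x_1)$ with a new reticulation $p_{x_1}$ leaves $r$ with its unique child a reticulation, destroying tree-childness. Neither of your suggested fixes works: inserting an ``auxiliary tree vertex'' between $r$ and $p_{x_1}$ creates a vertex of out-degree one, and placing the new reticulation higher on the $x_1$-lineage either again puts a reticulation immediately above the reticulation $r$ or forces the re-routed embedding of $x_1$ to merge with an unrelated cluster, breaking display. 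The correct move, and the reason the lemma is stated for (non-binary) tree-child networks, is to create no new vertex above $x_1$ at all: add the edge $(p_{y_1},r)$ into the \emph{existing} reticulation, raising its in-degree by one. This preserves tree-childness ($r$'s child is still the leaf $x_1$), adds exactly one to $h(\cN)$, matching $w(\sigma)=w(\sigma')+1$, and the display argument goes through by choosing, tree by tree, which in-edge of $r$ the embedding uses. Note this also means you cannot invoke Lemma~\ref{l:embedding}, which is stated only for binary tree-child networks. Two further points need explicit argument rather than assertion: the symmetric danger that the former parent $g_{x_1}$ of $x_1$ is a tree vertex already having a reticulation child (which would likewise break tree-childness) must be ruled out --- it is, by your own (TC) observation combined with the inductive hypothesis (ii), since that configuration would force a pair of $\sigma'$ with $x_1$ as second coordinate --- and the propagation of (iv), which you only gesture at, is precisely the content the paper hides in its ``straightforward extension'' of \cite[Lemma~3.1]{linz19}; in your scheme it requires reanalysing every vertex whose inherited witness pair used a now-destroyed tree path ending at $x_1$.
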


\begin{proof}
Properties (i) and (ii) are established in~\cite[Lemma 3.1]{linz19}.  Moreover, Property (iii) follows from applying Property (ii) twice, and Property (iv) follows from Properties (i) and (ii), and a straightforward extension of the proof of~\cite[Lemma 3.1]{linz19}.
\end{proof}

\begin{lemma}\label{l:one-binary}
Let $\cP$ be a set of  binary phylogenetic $X$-trees. Suppose that there exists a normal cherry-picking sequence $\sigma$ for $\cP$. Then there exists a normal network $\cN$ on $X$ that displays $\cP$ with $h(\cN)\le w(\sigma)$.
\end{lemma}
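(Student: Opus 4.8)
The plan is to start from the tree-child network guaranteed by Lemma~\ref{l:three-properties}. Since $\sigma$ is in particular a tree-child cherry-picking sequence, that lemma furnishes a tree-child network $\cN$ on $X$ displaying $\cP$ with $h(\cN)\le w(\sigma)$, together with the four structural properties (i)--(iv) relating the tree vertices and reticulations of $\cN$ to ordered pairs and subsequences appearing in $\sigma$. The goal is then to upgrade this tree-child network to a \emph{normal} one without increasing the hybridisation number, and the whole burden of the proof is to show that the extra hypothesis (N) on $\sigma$ forces $\cN$ to have no shortcuts. So I would argue: \emph{if $\cN$ had a shortcut, then reading off ordered pairs along the shortcut via properties (i)--(iv) would produce a contiguous substring $\sigma[i,j]$ that is a shortcut sequence, contradicting (N).}

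Concretely, I would suppose for contradiction that $\cN$ contains a shortcut reticulation edge $(u,w)$, so that there is a directed path $P = u=z_0, z_1, \ldots, z_r = w'$ from $u$ to a parent $w'$ of $w$ that avoids the edge $(u,w)$, with $w$ the reticulation. The idea is to walk along $P$ and at each step extract ordered pairs of leaves using the properties of Lemma~\ref{l:three-properties}: property (ii) handles the tree-vertex-to-reticulation step at the bottom (giving a pair $(\ell_w, \ell_u)$ with the reticulation leaf first), property (iv) handles consecutive tree-vertex steps along $P$ (producing a subsequence $(\ell_1,\ell_1'),(\ell_2,\ell_2')$ with overlapping but non-identical first coordinates), and property (i) anchors the endpoints. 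Because the shortcut begins and ends at vertices on a common directed path from $u$, the first leaf-coordinate $x_{i_1}$ read at the top and the last leaf-coordinate $x_{i_m}$ read at the bottom coincide (both being a leaf at the end of a tree path from the relevant child of $u$), and the overlap condition from (iv) is exactly the ``consecutive intersection'' condition~(4) in the definition of a shortcut sequence. The distinctness of the intermediate first-coordinates (condition~(3)) and the avoidance condition $x_{i_1}\notin\{x_{i_j},y_{i_j}\}$ (condition~(2)) should follow from the fact that the vertices $z_1,\ldots,z_{r-1}$ are distinct and lie strictly below $u$ on $P$, so none of their associated tree paths can end at the leaf anchoring $u$ (here the no-self-ancestor / acyclicity of $\cN$ and the normality-in-progress are used). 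The degenerate case $l=2$, i.e.\ the shortcut has length two, corresponds to the $m=2$ clause $x_{i_1}=x_{i_2}$, $y_{i_1}=y_{i_2}$ of the shortcut-sequence definition, handled by property (iii), which is precisely designed to output a repeated pair $(\ell_1,\ell_1'),(\ell_1,\ell_1')$ when a reticulation is a child of both $u$ and its parent $u'$.

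The hard part will be bookkeeping the \emph{order} of the extracted pairs so that they form a contiguous substring $\sigma[i,j]$ rather than merely a scattered subsequence, and verifying all four numbered conditions simultaneously with a single choice of leaves. The definition of a shortcut sequence only demands the existence of a \emph{subsequence} $(x_{i_1},y_{i_1}),\ldots,(x_{i_m},y_{i_m})$ with the four properties, while hypothesis (N) forbids this for \emph{every} window $\sigma[i,j]$; so the delicate point is to locate the first and last indices $i,j$ of the window correctly, namely the position of the pair anchoring the top vertex $u$ and the position anchoring the bottom reticulation edge, and then to exhibit the verifying subsequence inside that window. I would build the verifying subsequence inductively by descending $P$ one tree-vertex at a time, at each stage invoking property (iv) to extend the current subsequence by one pair that overlaps the previous one in a leaf but differs in its first coordinate, and invoking the tree-child property (TC) of $\sigma$ to keep all the extracted pairs lying within one coherent window of $\sigma$.

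Finally, having derived the contradiction, I would conclude that $\cN$ has no shortcut; combined with $\cN$ being tree-child, this means $\cN$ is normal, and since we never modified $\cN$ the bound $h(\cN)\le w(\sigma)$ is inherited directly from Lemma~\ref{l:three-properties}. This completes the proof. One subtlety I would double-check is that the leaves anchoring the various vertices can be chosen \emph{consistently} across the different applications of (i), (ii), (iv): property (iv) already guarantees the overlap $\{\ell_1,\ell_1'\}\cap\{\ell_2,\ell_2'\}\ne\emptyset$ together with $\ell_1\ne\ell_2$, which is exactly what condition~(4) plus the distinctness in condition~(3) require, so the mechanism of (iv) is tailored to make this consistency automatic; the binary hypothesis on $\cP$ ensures each tree vertex has exactly two children so that ``the two children of $u$'' in the statements of (i) and (iv) is unambiguous.
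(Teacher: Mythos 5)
Your overall strategy is exactly the paper's: invoke Lemma~\ref{l:three-properties} to get a tree-child network $\cN$ with $h(\cN)\le w(\sigma)$ satisfying (i)--(iv), assume a shortcut, read off leaf pairs along the path $P$ via (ii), (iii) and (iv), and contradict (N). (Your worry about contiguity is a non-issue: $\sigma[i,j]$ is a shortcut sequence as soon as \emph{some} subsequence anchored at its first and last pairs verifies the four conditions, as you eventually note.) But there is a genuine gap at the heart of your sketch: you assert that the first coordinate read at the top of the shortcut and the one read at the bottom coincide, ``both being a leaf at the end of a tree path from the relevant child of $u$.'' This is false as stated. The reticulation $v$ has a single child, but the subtree below it can branch, so several distinct leaves can terminate tree paths starting at $v$, and the separate applications of Properties (ii) and (iv) at the two parents of $v$ give you no control over which such leaf is selected at each end. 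Establishing condition~(1), i.e.\ $\ell_1=\ell_m$, is the hardest step of the paper's proof: it first assumes without loss of generality that every ordered pair of $\sigma$ is \emph{essential} (a reduction you never make), then argues that if $\ell_1\ne\ell_m$ every tree in $\cP$ contains the triple $(\ell_1,\ell_m,\ell_1')$, uses essentiality to produce distinct trees $\cT,\cT'\in\cP$ from which $(\ell_1,\ell_1')$ and $(\ell_m,\ell_m')$ pick, and derives from (TC) two incompatibly ordered subsequences of $\sigma$. Without some argument of this kind your verifying subsequence need not close up, and no contradiction with (N) arises.

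Two smaller but still real gaps. First, you never fix a shortcut of \emph{minimum length}; the paper does, and uses minimality twice: to guarantee that no reticulation on $P$ has both parents on $P$, so that every tree vertex $u_j$ on $P$ has a child $w_j$ off $P$ and the pairs you want to read off via (iv) actually exist; and to guarantee that $v$ is not a child of any intermediate $u_j$, which is what makes condition~(2) (the first coordinate $\ell_1$ avoiding all intermediate pairs) go through. Second, your claim that condition~(3) (distinctness of the intermediate first coordinates) follows from the distinctness of the vertices $z_1,\ldots,z_{r-1}$ does not hold: the leaves are not determined by the vertices, since Property (iv) may select a first coordinate from the \emph{on-path} child of $u_j$, and tree paths starting at on-path children of different $u_j$'s can terminate at the same leaf. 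The paper instead proves distinctness by choosing a minimal violating pair $\ell_i=\ell_j$ and deriving $\{\ell_{j-1},\ell_{j-1}'\}\cap\{\ell_j,\ell_j'\}=\emptyset$, contradicting Property (iv) via (TC) --- an argument your structural hand-wave does not replace. So the route is the right one, but as written the proposal would fail at condition~(1) and is incomplete at conditions~(2) and~(3).
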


\begin{proof}
Without loss of generality, we may assume that each ordered pair in $\sigma$ is essential. It follows from Lemma~\ref{l:three-properties} that there exists a tree-child network $\cN$ on $X$ that satisfies Properties (i)--(iv) as stated in the same lemma and displays $\cP$ with $h(\cN)\le w(\sigma)$. We  complete the proof by showing that $\cN$ is also normal. Towards a contradiction, assume that $\cN$ has a shortcut. Let $(u',v)$ be a shortcut of minimum length over all shortcuts in $\cN$. Furthermore, let $u$ be a parent of $v$ that is not $u'$ such that there exists a directed path $P$ from $u'$ to $u$ in $\cN$. If the second child of $u'$ that is not $v$ equates to $u$, then, as $\cN$ satisfies Property (iii) in the statement of Lemma~\ref{l:three-properties}, there are  leaves $\ell_1$ and $\ell_1'$ at the end of tree paths starting at the two children of $u$ such that $(\ell_1,\ell_1'),(\ell_1,\ell_1')$ is a subsequence of $\sigma$. Hence $\sigma$ contains a shortcut sequence, a contradiction.  We may therefore assume for the remainder of the proof that the length of $(u',v)$ is at least three. In order, let $u'=u_m,u_{m-1},\ldots,u_2,u_1=u$ be the tree vertices on $P$, where $m\geq 3$ as $\cN$ is tree-child. Furthermore,  let $w_1$ denote the child of $u_1$ that is not $v$ and, for each $j\in\{2, 3, \ldots, m-1\}$, let  $w_j$  be the child of $u_j$ in $\cN$ that does not lie on $P$. As $(u',v)$ is a shortcut of minimum length, there is no reticulation on $P$ that has two parents that also both lie on $P$ and, thus, $w_j$ exists for each $j\in\{2, 3, \ldots, m-1\}$. Moreover, because $\cN$ is tree-child, observe that the child of $u_j$ that is not $w_j$ is either  $u_{j-1}$ or  a reticulation in which case the child of  that reticulation  equates to $u_{j-1}$. The setup is shown in Figure~\ref{fig:multi-tree-proof}.

\begin{figure}[t]
\center
\scalebox{0.92}{\input{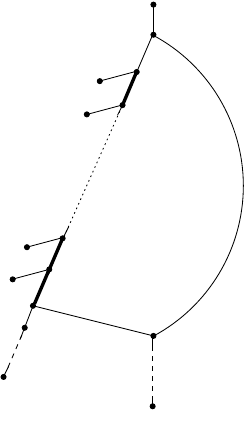_t}}
\caption{The setup of $\cN$ as described in the proof of Lemma~\ref{l:one-binary}. Each of the three thick lines indicates directed paths that consists of at most two edges and each of the two dashed lines indicates a tree path.}
\label{fig:multi-tree-proof}
\end{figure}

As $\cN$ satisfies Property (ii) in the statement of Lemma~\ref{l:three-properties}, there exist a leaf $\ell_1$  at the end of a tree path starting at $v$ and a leaf $\ell_1'$ at the end of a tree path starting at $u_1$ such that $(\ell_1,\ell_1')$ is an element in $\sigma$. Now consider each $j\in\{2,3,\ldots,m-1\}$ in order.  Since $\cN$  satisfies Property (iv) in the statement of Lemma~\ref{l:three-properties}, there exist leaves $\ell_j$ and $\ell_j'$  at the end of tree paths starting at the two children of $u_j$ and leaves $\ell_{j-1}$ and $\ell'_{j-1}$ at the end of tree paths starting at the two children of $u_{j-1}$ such that $(\ell_{j-1},\ell_{j-1}'),(\ell_j,\ell_j')$ is a subsequence of $\sigma$, $\{\ell_{j-1},\ell_{j-1}'\}\cap \{\ell_j,\ell_j'\}\ne\emptyset$, and $\ell_{j-1}\ne\ell_j$. Lastly, consider $u_m$. By Properties (ii) and (iv) in the statement of Lemma~\ref{l:three-properties}, there exist a leaf $\ell_m$  at the end of a tree path starting at $v$ and a leaf  $\ell_m'$ at the end of a tree path starting at $u_{m-1}$ such that $(\ell_{m-1},\ell_{m-1}'),(\ell_m,\ell_m')$ is a subsequence of $\sigma$, $\{\ell_{m-1},\ell_{m-1'}'\}\cap \{\ell_m,\ell_m'\}\ne\emptyset$, and $\ell_{m-1}\ne\ell_m$. It now follows that
$$(\ell_{1},\ell_{1}'),(\ell_2,\ell_2'),\ldots,(\ell_{m},\ell_{m}')$$
is a subsequence of $\sigma$ such that, for each $j\in\{2,3,\ldots,m\}$, the intersection $\{\ell_{j-1},\ell_{j-1}'\}\cap \{\ell_j,\ell_j'\}$ is non-empty. By the choice of $(u',v)$, $v$ is not a child of a vertex in $\{u_2,u_3,\ldots,u_{m-1}\}$ and, thus, $\ell_1$ is not a leaf at the end of a tree path that starts at a child of a vertex in $\{u_2,u_3,\ldots,u_{m-1}\}$. Hence, no ordered pair $(\ell_j,\ell_j')$ with $j\in\{2,3,\ldots,m-1\}$ contains $\ell_1$. 

Now assume that there exist elements $i,j\in\{2,3,\ldots,m-1\}$ with $i<j$ such that $\ell_i=\ell_{j}$. Without loss of generality, we can choose $i$ and $j$ such that there exists no $j'\in\{i+1,i+2,\ldots,j-1\}$ with $\ell_{j'}=\ell_{j}$. Since $\ell_j\neq \ell_{j+1}$ for all $j\in \{1, 2, \ldots, m-1\}$, we have $i+1<j$. Consider the subsequence
 $$(\ell_i,\ell_i'),\ldots,(\ell_{j-1},\ell_{j-1}'), (\ell_{j},\ell_{j}')$$
 of $\sigma$. Since $\ell_{i}=\ell_j$, it follows that $\ell_{j}'$ is a leaf at the end of a tree path starting at $w_{j}$. Moreover, by the choice of $j$ and because $\sigma$ satisfies (TC), we have $\ell_{j}\notin\{\ell_{j-1},\ell_{j-1}'\}$. As $\ell_j'$ is not a leaf at the end of any tree path starting at the two children of $u_{j-1}$, it now follows that $\{\ell_{j-1},\ell_{j-1}'\}\cap\{\ell_{j},\ell_{j}'\}=\emptyset$, thereby contradicting that $\cN$ satisfies Property (iv) in the statement of Lemma~\ref{l:three-properties}. We conclude that the elements $\ell_2,\ell_3,\ldots,\ell_{m-1}$ are distinct. 

We next show that $\ell_1=\ell_m$. As $\sigma$ satisfies (TC), observe that $\ell_m'\ne\ell_1$. Assume that $\ell_1\ne\ell_m$. Then $\ell_1$ and $\ell_m$ are two distinct leaves at the end of tree paths starting at $v$. Furthermore, each phylogenetic $X$-tree displayed by $\cN$ is binary and contains the triple $(\ell_1,\ell_m,\ell_1')$. Thus, each tree in $\cP$ also contains the triple $(\ell_1,\ell_m,\ell_1')$. As each ordered pair in $\sigma$ is essential, there exist binary phylogenetic $X$-trees $\cT$ and $\cT'$ in $\cP$ such that $(\ell_1,\ell_1')$ picks $\ell_1$ from $\cT$ and $(\ell_m,\ell_m')$ picks $\ell_m$ from $\cT'$. Clearly, $\cT$ and $\cT'$ are distinct. Since $(\ell_1,\ell_m,\ell_1')$ is a triple of $\cT$, there exists a subsequence $\sigma_1$ of $\sigma$ such that $\ell_m$ is the first coordinate of the first ordered pair of $\sigma_1$ and $\ell_1$ is the second coordinate of the last ordered pair of $\sigma_1$. Similarly, since $(\ell_1,\ell_m,\ell_1')$ is a triple of $\cT'$, there exists a subsequence $\sigma_2$ of $\sigma$ such that $\ell_1$ is the first coordinate of the first ordered pair of $\sigma_2$ and $\ell_m$ is the second coordinate of the last ordered pair of $\sigma_2$. Since $\sigma$ satisfies (TC), the last ordered pair of $\sigma_1$ precedes the first ordered pair of $\sigma_2$. This implies that the first ordered pair of $\sigma_1$ precedes the last ordered pair of $\sigma_2$ in $\sigma$. Hence, $\sigma$ does not satisfy (TC), a contradiction. We conclude that $\ell_1=\ell_m$. It now follows that $(\ell_1,\ell_1'),(\ell_2,\ell_2'),\ldots,(\ell_m,\ell_m')$ verifies a shortcut sequence of $\sigma$, thereby contradicting that $\sigma$ satisfies (N). This final contradiction completes the proof of the lemma.
\end{proof}

We now generalise Lemma~\ref{l:one-binary} to arbitrary collections of phylogenetic trees.

\begin{corollary}\label{c:one-non-binary}
Let $\cP$ be a set of  phylogenetic $X$-trees. Suppose that there exists a normal cherry-picking sequence $\sigma$ for $\cP$. Then there exists a normal network $\cN$ on $X$ that displays $\cP$ with $h(\cN)\le w(\sigma)$.
\end{corollary}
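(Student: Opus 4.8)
The plan is to reduce the statement to the binary case already settled in Lemma~\ref{l:one-binary}. Since $\sigma$ is fixed and the properties (TC) and (N) depend on $\sigma$ alone and not on the underlying collection of trees, it suffices to replace each tree $\cT\in\cP$ by a suitable binary refinement $\cT^*$ on $X$ so that $\sigma$ is still a tree-child cherry-picking sequence for the resulting collection $\cP^*=\{\cT^*:\cT\in\cP\}$. Then $\sigma$ is automatically a normal cherry-picking sequence for $\cP^*$ with the same weight $w(\sigma)=s+1-|X|$, so Lemma~\ref{l:one-binary} yields a normal network $\cN$ on $X$ that displays $\cP^*$ with $h(\cN)\le w(\sigma)$. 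Finally, as each $\cT^*$ is a binary refinement of $\cT$ and $\cN$ displays $\cT^*$, the network $\cN$ also displays $\cT$, using the fact recorded immediately before Corollary~\ref{c:two} that a network displaying a binary refinement of a tree displays the tree itself; hence $\cN$ displays $\cP$, as required.

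To construct $\cT^*$, I would fix $\cT\in\cP$ and consider the reduction $\cT=\cT_0,\cT_1,\ldots,\cT_s$, where $\cT_i$ is obtained from $\cT_{i-1}$ by cherry picking $(x_i,y_i)$; since $\sigma$ is a tree-child cherry-picking sequence for $\cP$, we have $\cT_s=x_{s+1}$. I would build $\cT^*$ by reverse engineering this reduction: set $\cT^*_s$ to be the single vertex $x_{s+1}$ and, for $i=s,s-1,\ldots,1$, let $\cT^*_{i-1}=\cT^*_i$ if $(x_i,y_i)$ is not essential for $\cT$, and otherwise obtain $\cT^*_{i-1}$ from $\cT^*_i$ by subdividing the edge directed into $y_i$ with a new vertex and appending $x_i$ to it, so that $\{x_i,y_i\}$ becomes a cherry of $\cT^*_{i-1}$. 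Put $\cT^*=\cT^*_0$. Each $\cT^*_i$ is binary, and since every leaf of $X$ other than $x_{s+1}$ is removed from $\cT$ at exactly one essential step, $\cT^*$ is a binary phylogenetic $X$-tree. By construction, applying $\sigma$ to $\cT^*$ picks off exactly the leaves that are essential for $\cT$, so $\cT^*$ reduces to $x_{s+1}$ and $\sigma$ is a tree-child cherry-picking sequence for $\cT^*$.

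The crux is to verify the invariant that $\cT^*_i$ is a binary refinement of $\cT_i$ for every $i$, proceeding by reverse induction from $i=s$. The base case is immediate. For the inductive step I would argue at the level of clusters, using that $\cT'$ refines $\cT$ exactly when every cluster of $\cT$ is a cluster of $\cT'$. If $(x_i,y_i)$ is not essential, there is nothing to do. If $(x_i,y_i)$ is essential, then $x_i$ and $y_i$ share a parent in $\cT_{i-1}$; a short case analysis on whether a cluster of $\cT_{i-1}$ contains $x_i$, and on whether that shared parent has out-degree two or is a polytomy, shows that each such cluster is realised in $\cT^*_{i-1}$, where the clusters of the vertices on the $y_i$-to-root path gain $x_i$ and the new vertex realises $\{x_i,y_i\}$. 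The same invariant also gives the needed cherry correspondence: if $(x_i,y_i)$ is not essential for $\cT$, then either $x_i$ is absent from $\cT_{i-1}$, and hence from $\cT^*_{i-1}$, or $x_i$ and $y_i$ do not share a parent in $\cT_{i-1}$, and in either case $\{x_i,y_i\}$ is not a cherry of the refinement $\cT^*_{i-1}$, since a cherry of $\cT^*_{i-1}$ would force $x_i$ and $y_i$ to share a parent in every contraction of $\cT^*_{i-1}$, in particular in $\cT_{i-1}$. I expect this cluster bookkeeping across polytomies to be the main obstacle; once the refinement invariant is in place, the remaining steps are routine.
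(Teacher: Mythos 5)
Your proposal is correct and takes essentially the same route as the paper: the paper's proof likewise reduces to Lemma~\ref{l:one-binary} by replacing each tree in $\cP$ with a binary refinement for which $\sigma$ remains a normal cherry-picking sequence (using that (TC), (N), and $w(\sigma)$ depend on $\sigma$ alone, and that displaying a binary refinement implies displaying the tree), except that the paper simply asserts such refinements exist while you construct them explicitly by reverse-engineering the reduction and verifying the refinement invariant. The one cosmetic fix needed is the last essential step for each tree, where $\cT^*_i$ is the single vertex $y_i$ and there is no edge directed into $y_i$ to subdivide; there you instead create a new root with children $x_i$ and $y_i$.
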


\begin{proof}
Let $\cP=\{\cT_1,\cT_2,\ldots,\cT_n\}$. For each $\cT_i\in\cP$ with $i\in\{1,2,\ldots,n\}$, let $\cT'_i$ be a binary refinement of $\cT_i$ such that $\sigma$ is a normal cherry-picking sequence for $\cT_i'$. It  follows from Lemma~\ref{l:one-binary} that there exists a normal network $\cN$ on $X$ that displays $\{\cT_1',\cT_2',\ldots,\cT_n'\}$ with $h(\cN)\le w(\sigma)$. By construction, $\cN$ also displays $\cP$ and, thus, the corollary follows.
\end{proof}

The next corollary follows from Corollary~\ref{c:one-non-binary} by choosing a normal cherry-picking sequence $\sigma$ of minimum weight for $\cP$.

\begin{corollary}\label{c:normal-one}
Let $\cP$ be a set of phylogenetic $X$-trees. Suppose that there exists a normal cherry-picking sequence for $\cP$. Then $h_{\rm n}(\cP)\leq s(\cP)$.
\end{corollary}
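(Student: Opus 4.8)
The plan is to observe that this corollary is an immediate packaging of Corollary~\ref{c:one-non-binary}, so the proof reduces to selecting the right sequence and reading off the inequality. First I would note that the hypothesis guarantees that the set of normal cherry-picking sequences for $\cP$ is non-empty; since the weight $w(\sigma)=s+1-|X|$ is a non-negative integer for every such $\sigma$, the minimum is attained, so a \emph{minimum} normal cherry-picking sequence $\sigma$ for $\cP$ exists and $w(\sigma)=s(\cP)$ by the definition of $s(\cP)$.

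Next I would apply Corollary~\ref{c:one-non-binary} to this particular $\sigma$. That corollary produces a normal network $\cN$ on $X$ which displays $\cP$ and satisfies $h(\cN)\le w(\sigma)=s(\cP)$. In particular $\cN$ is a normal network on $X$ that displays $\cP$, so it is one of the networks over which the minimum defining $h_{\rm n}(\cP)$ is taken.

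Finally I would read off the chain of inequalities
$$h_{\rm n}(\cP)\le h(\cN)\le w(\sigma)=s(\cP),$$
where the first inequality is the definition of $h_{\rm n}(\cP)$ as a minimum over all normal networks displaying $\cP$, and the middle inequality is the conclusion of Corollary~\ref{c:one-non-binary}. This yields $h_{\rm n}(\cP)\le s(\cP)$, as required.

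There is essentially no obstacle at this level: all of the genuine content has already been discharged in Lemma~\ref{l:one-binary} (the binary case, where the delicate argument ruling out shortcuts via Property (N) and the tree-child property lives) and in its extension to arbitrary trees via binary refinements in Corollary~\ref{c:one-non-binary}. The only point that deserves an explicit word is the well-definedness of $s(\cP)$ under the stated hypothesis, i.e.\ that a minimiser exists; this is immediate since $w(\sigma)$ ranges over non-negative integers. The remaining step is purely a matter of unwinding the definition of $h_{\rm n}(\cP)$.
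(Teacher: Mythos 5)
Your proof is correct and follows exactly the paper's route: the paper proves this corollary in one line by applying Corollary~\ref{c:one-non-binary} to a normal cherry-picking sequence of minimum weight for $\cP$ and unwinding the definition of $h_{\rm n}(\cP)$. Your additional remark that the minimum $s(\cP)$ is attained (since weights are non-negative integers) is a harmless elaboration of the same argument.
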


Let $\cN$ be a tree-child network on $X$ with root $\rho$ that displays a collection $\cP$ of phylogenetic $X$-trees, and let $v_1, v_2, \ldots, v_r$ denote the reticulations of $\cN$. Furthermore, let $\ell_{\rho}, \ell_1, \ell_2, \ldots, \ell_r$ denote the leaves at the end of tree paths starting at $\rho, v_1, v_2, \ldots, v_r$, respectively.  For each $j\in\{1,2,\ldots,r\}$, we say that $\ell_j$ {\it verifies} $v_j$. The following algorithm {\sc Construct Sequence}, which was first published in~\cite{linz19}, constructs a sequence $\sigma$ of ordered pairs in $X\times (X\cup \{-\})$ from $\cN$. It was shown in~\cite[Lemma 3.4]{linz19} that $\sigma$ is a tree-child cherry-picking sequence for $\cP$. After stating the algorithm, we show that $\sigma$ is in fact a normal cherry-picking sequence for when $\cN$ is normal.

\noindent{\sc Construct Sequence}
\begin{enumerate}[{\bf Step 1.}]
\item Set $\cN=\cN_0$ and $\sigma_0$ to be the empty sequence. Set $i=1$.

\item \label{step:loop} If $\cN_{i-1}$ consists of a single leaf $x_i$, then set $\sigma_i$ to be the concatenation of $\sigma_{i-1}$ and $(x_i, -)$, and return $\sigma_i$.

\item If $\{x_i, y_i\}$ is a cherry in $\cN_{i-1}$, then
\begin{enumerate}[{\bf (a)}]
\item \label{step:3a} If one of $x_i$ and $y_i$, say $x_i$, equates to $\ell_j$ for some $j\in \{1, 2, \ldots, r\}$ and $v_j$ is not a reticulation in $\cN_{i-1}$, then set $\sigma_i$ to be the concatenation of $\sigma_{i-1}$ and $(x_i, y_i)$.

\item Otherwise, set $\sigma_i$ to be the concatenation of $\sigma_{i-1}$ and $(x_i, y_i)$, where $x_i\not\in\ \{\ell_\rho, \ell_1, \ell_2, \ldots, \ell_r\}$.

\item Set $\cN_i$ to be the tree-child network obtained from $\cN_{i-1}$ by deleting $x_i$.

\item Increase $i$ by one and go to Step~\ref{step:loop}.
\end{enumerate}

\item Else, there is a reticulated cherry $\{x_i, y_i\}$ in $\cN_{i-1}$, where $x_i$ say is the reticulation leaf.
\begin{enumerate}[{\bf (a)}]
\item \label{step:4a}Set $\sigma_i$ to be the concatenation of $\sigma_{i-1}$ and $(x_i, y_i)$.

\item Set $\cN_i$ to be the tree-child network obtained from $\cN_{i-1}$ by deleting $(p_{y_i},p_{x_i})$ and suppressing the two resulting degree-two vertices.

\item Increase $i$ by one and go to Step~\ref{step:loop}.
\end{enumerate}
\end{enumerate}

\begin{lemma}\label{l:normal-two}
Let $\cP$ be a set of phylogenetic $X$-trees. Suppose that there exists a normal network $\cN$ on $X$ that displays $\cP$. Then there exists a normal cherry-picking sequence $\sigma$ for $\cP$ with $w(\sigma)\leq h(\cN)$.
\end{lemma}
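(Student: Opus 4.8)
The plan is to run the algorithm {\sc Construct Sequence} on $\cN$ and to argue that its output $\sigma$ is a normal cherry-picking sequence for $\cP$ of weight at most $h(\cN)$. By \cite[Lemma~3.4]{linz19} the sequence $\sigma$ returned by {\sc Construct Sequence} is already known to be a tree-child cherry-picking sequence for $\cP$, so property (TC) holds and each tree in $\cP_s$ is a single vertex. Two things then remain: to bound $w(\sigma)$ and to verify the normal property (N).

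The weight bound is a counting argument. Each iteration of {\sc Construct Sequence} either picks a cherry and deletes a leaf (Step~3) or picks a reticulated cherry and deletes a single reticulation edge (Step~4), and the algorithm halts once a single leaf remains. Hence there are exactly $|X|-1$ leaf deletions, while the number of reticulated-cherry steps equals the total decrease of the hybridisation number from $h(\cN)$ down to $0$, namely $h(\cN)$, since each such step lowers the in-degree of one reticulation by one. Writing $s$ for the number of ordered pairs of $\sigma$ having a second coordinate, this gives $s=(|X|-1)+h(\cN)$, and therefore $w(\sigma)=s+1-|X|=h(\cN)$, so in particular $w(\sigma)\le h(\cN)$.

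The substantive part is establishing (N): no substring $\sigma[i,j]$ is a shortcut sequence. I would prove this by contraposition, showing that a shortcut sequence inside $\sigma$ forces a shortcut in $\cN$, contradicting normality. This is the converse of the implication proved inside Lemma~\ref{l:one-binary}, where a shortcut in a network satisfying Properties~(i)--(iv) of Lemma~\ref{l:three-properties} was shown to produce a shortcut sequence; here $\cN$ is assumed shortcut-free and the structure must instead be read off $\sigma$. So suppose some $\sigma[i,j]$ is a shortcut sequence with verifying subsequence $(x_{i_1},y_{i_1}),\ldots,(x_{i_m},y_{i_m})$. Using the correspondence between the pairs produced by {\sc Construct Sequence} and the structure of $\cN$, I would attach to each verifying pair the vertices of $\cN$ it records: a Step~4 pair $(x_{i_k},y_{i_k})$ records a reticulation edge of $\cN$ whose reticulation leaf is $x_{i_k}$, and the shared-element condition forces the tree vertices and reticulations underlying consecutive pairs to be joined by short directed paths. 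Concatenating these along the chain yields a directed path $P$ in $\cN$, and the endpoint condition $x_{i_1}=x_{i_m}$, together with the facts that $x_{i_1}$ avoids the intermediate pairs and that the intermediate first coordinates $x_{i_2},\ldots,x_{i_{m-1}}$ are distinct, forces $P$ to run from one parent of the reticulation $v$ verified by $x_{i_1}$ to a second parent of the very same $v$; the reticulation edge into $v$ then closes $P$ into a shortcut. The degenerate case $m=2$, in which a single pair $(x,y)$ is recorded twice, corresponds to the configuration in Property~(iii) of Lemma~\ref{l:three-properties}, namely a reticulation $v$ whose two parents $u,u'$ satisfy that $u'$ is a parent of $u$; this is a length-two shortcut, and tracing the two recordings of $(x,y)$ back to $\cN$ exhibits exactly this configuration.

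I expect the main obstacle to be making the vertex-and-edge correspondence precise and robust under the suppressions performed by {\sc Construct Sequence}: as reticulation edges are removed and degree-two vertices suppressed, the parents recorded by later pairs live in the reduced networks $\cN_{i-1}$, so I must track them back to $\cN$ and verify that the shared-leaf adjacencies really do assemble into a single directed path rather than merely a walk. This is the exact dual of Property~(iv) of Lemma~\ref{l:three-properties}, so the care required is to re-establish that correspondence in the read-off direction, in particular ruling out that two distinct reticulations are verified by the same leaf in a way that would break the chain. Once the path $P$ and the reticulation $v$ are correctly identified, the contradiction with the absence of shortcuts in $\cN$ is immediate, and (N) follows.
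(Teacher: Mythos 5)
Your proposal is correct and takes essentially the same route as the paper's proof: run {\sc Construct Sequence} on $\cN$, invoke \cite[Lemma~3.4]{linz19} for (TC) and the weight bound, and establish (N) by showing that any shortcut sequence inside $\sigma$ forces a shortcut in $\cN$. The details you defer are exactly how the paper executes that step: it observes that a repeated first coordinate marks a leaf verifying a reticulation $v$ of $\cN$ (via the tie-breaking in Step~3), handles your degenerate $m=2$ case by noting $\ell_1'\neq\ell_m'$, and uses property (4) of shortcut sequences together with (TC) to chain ancestor relations among the parents of the second coordinates through the reduced networks $\cN_{j-1}$, concluding that a parent $p$ of $v$ is an ancestor of the reticulation leaf in $\cN$ and hence that $(p,v)$ is a shortcut.
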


\begin{proof}
Let $\sigma=(x_1,y_1),(x_2,y_2),\ldots,(x_{s},y_{s}),(x_{s+1},-)$ be the sequence of ordered pairs in $X\times (X\cup \{-\})$  that is obtained from applying {\sc Construct Sequence} to $\cN$. It immediately follows from~\cite[Lemma 3.4]{linz19} that $\sigma$ is a tree-child cherry-picking sequence  for $\cP$ with $ w(\sigma)\leq h(\cN)$. By inspecting {\sc Construct Sequence}, we also have the following observation: If two ordered pairs of $\sigma$ have the same first coordinate, say $\ell$, then there exists a reticulation $v$ in $\cN$ such that $\ell$ is a leaf at the end of a tree path starting at $v$. Lastly, in each iteration $i$ of {\sc Construct Sequence} with $i\in\{1,2,\ldots,s\}$, it follows from straightforward generalisations of~\cite[Lemmas 6 and 7]{doecker20} to normal networks whose reticulations have in-degree at least two that $\cN_i$ is normal. 

To complete the proof, we show that $\sigma$ also satisfies (N). Towards a contradiction, assume that $\sigma$ contains a substring $\sigma[i,i']$ that is a shortcut sequence for some $i,i'\in\{1,2,\ldots,s\}$ with $i<i'$. Let $\sigma_s=(\ell_1,\ell_1'),(\ell_2,\ell_2'),\ldots,(\ell_m,\ell_m')$ be the subsequence of $\sigma[i,i']$ that verifies the shortcut sequence. By definition of a shortcut sequence, we have $x_i=\ell_1=\ell_m=x_{i'}$. As $\cN$ does not contain a shortcut, it also follows that $\ell_1'\ne \ell_m'$ and, thus, $m\geq 3$. Hence by the observation in the last paragraph, there exists a reticulation $v$ in $\cN$, such that $\ell_1$ is a leaf at the end of a tree path starting at $v$. Furthermore, $\{\ell_1,\ell_1'\}$ is a reticulated cherry  with reticulation leaf $\ell_1$ in  $\cN_{i-1}$, and $\{\ell_1,\ell_m'\}$ is either a cherry or a reticulated cherry with reticulation leaf $\ell_1$ in  $\cN_{i'-1}$. Let $p$ be the parent of $v$ in $\cN_{i-1}$ such that $p$ is a vertex of $\cN_{i'-1}$ but not a vertex of $\cN_{i'}$. Note that $p$ is a tree vertex of $\cN_{i'-1}$.

For each ordered pair $(x_j, y_j)$ in $\sigma$, either $x_j$ and $y_j$ have the same parent if $\{x_j, y_j\}$ is a cherry of $\cN_{j-1}$ or the parent of $y_j$ is a grandparent of $x_j$ if $\{x_j, y_j\}$ is a reticulated cherry with reticulation leaf $x_j$ of $\cN_{j-1}$. 
Let $(\ell_j, \ell'_j)$, $(\ell_{j'}, \ell'_{j'})$, and $(\ell_{j''}, \ell'_{j''})$ be three ordered pairs in $\sigma$ that are consecutive ordered pairs in $\sigma_s$. By Property (4) in the definition of a shortcut sequence and because $\sigma_s$ satisfies (TC), $\ell'_j\in \{\ell_{j'}, \ell'_{j'}\}$ and $\ell'_{j'}\in \{\ell_{j''}, \ell'_{j''}\}$. It now follows that the parent of $\ell'_{j''}$ is an ancestor of the parent of $\ell'_{j'}$ in $\cN_{j''-1}$ and, similarly, that the parent of $\ell'_{j'}$ is an ancestor of the parent of $\ell'_j$ in $\cN_{j'-1}$. Hence, the parent of $\ell_m'$ in $\cN_{i'-1}$, which is $p$, is an ancestor of $\ell'_1$  and therefore an ancestor of $\ell_1$ in $\cN$. But then $(p, v)$ is a shortcut in $\cN$, a contradiction as $\cN$ is normal. Thus $\sigma$ has no shortcut sequences, thereby completing the proof of the lemma.
\end{proof}

The next corollary follows from Lemma~\ref{l:normal-two} by choosing a normal network that displays a collection $\cP$ of phylogenetic trees  and whose hybridisation number is minimised over all such networks.

\begin{corollary}\label{c:normal-two}
Let $\cP$ be a set of phylogenetic $X$-trees. Suppose that $\cP$ is normal compatible. Then $h_{\rm n}(\cP)\geq s(\cP)$.
\end{corollary}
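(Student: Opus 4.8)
The plan is to derive the inequality directly from Lemma~\ref{l:normal-two} by evaluating it at a hybridisation-minimal normal network. First I would observe that the set
$$\{h(\cN) : \mbox{$\cN$ is a normal network on $X$ that displays $\cP$}\}$$
is non-empty, since $\cP$ is assumed to be normal compatible. As this set consists of non-negative integers, its minimum is attained; accordingly, let $\cN$ be a normal network on $X$ that displays $\cP$ with $h(\cN)=h_{\rm n}(\cP)$.

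Next I would apply Lemma~\ref{l:normal-two} to this particular $\cN$. Because $\cN$ is a normal network on $X$ that displays $\cP$, the lemma immediately yields a normal cherry-picking sequence $\sigma$ for $\cP$ satisfying $w(\sigma)\leq h(\cN)$.

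Finally, I would invoke the definition of $s(\cP)$ as the minimum weight taken over all normal cherry-picking sequences for $\cP$. Since $\sigma$ is one such sequence, we have $s(\cP)\leq w(\sigma)$, and chaining the inequalities gives
$$s(\cP)\leq w(\sigma)\leq h(\cN)=h_{\rm n}(\cP),$$
which is precisely the claimed bound $h_{\rm n}(\cP)\geq s(\cP)$.

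All of the genuine content of this argument has already been isolated in Lemma~\ref{l:normal-two}, whose proof uses {\sc Construct Sequence} together with the absence of shortcuts in $\cN$ to guarantee that the resulting sequence satisfies property (N). Consequently there is no real obstacle remaining at the level of the corollary itself; the only point requiring (brief) care is the existence of a minimiser realising $h_{\rm n}(\cP)$, and this is immediate because normal compatibility makes the set of competing networks non-empty while hybridisation numbers are non-negative integers, so a minimiser exists.
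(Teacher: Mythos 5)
Your proposal is correct and follows exactly the paper's route: the paper derives the corollary from Lemma~\ref{l:normal-two} by choosing a normal network displaying $\cP$ whose hybridisation number attains $h_{\rm n}(\cP)$, then comparing $w(\sigma)$ with $s(\cP)$. Your added remark on why a minimiser exists is a harmless (and correct) elaboration of the same argument.
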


The following theorem summarises Lemmas~\ref{l:one-binary} and \ref{l:normal-two}, and Corollaries~\ref{c:normal-one} and~\ref{c:normal-two}.

\begin{theorem}\label{t:normal-characterisation}
Let $\cP$ be a set of phylogenetic $X$-trees. Then $\cP$ is normal compatible if and only if there exists a normal cherry-picking sequence for $\cP$, in which case $h_{\rm n}(\cP)=s(\cP)$.
\end{theorem}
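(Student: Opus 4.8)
The plan is to assemble the theorem directly from the four preceding results, since each supplies exactly one of the implications we need. The statement has two parts: the biconditional characterising normal compatibility, and the numerical equality $h_{\rm n}(\cP)=s(\cP)$ that holds whenever either (equivalent) condition is met. The strategy is therefore not to prove anything new but to combine Lemma~\ref{l:normal-two}, Corollary~\ref{c:one-non-binary}, Corollary~\ref{c:normal-one}, and Corollary~\ref{c:normal-two} in the right order.

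First I would prove the biconditional. For the forward direction, suppose $\cP$ is normal compatible, so there is a normal network $\cN$ on $X$ that displays $\cP$. Applying Lemma~\ref{l:normal-two} to $\cN$ immediately produces a normal cherry-picking sequence $\sigma$ for $\cP$ (indeed one with $w(\sigma)\le h(\cN)$). For the converse, suppose a normal cherry-picking sequence exists for $\cP$. Then Corollary~\ref{c:one-non-binary} yields a normal network $\cN$ on $X$ that displays $\cP$ (with $h(\cN)\le w(\sigma)$), so $\cP$ is by definition normal compatible. This settles the equivalence of the two conditions.

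Next I would establish the equality, which may now be stated under either hypothesis since they have just been shown to coincide. The upper bound $h_{\rm n}(\cP)\le s(\cP)$ is exactly Corollary~\ref{c:normal-one}, obtained by feeding a minimum-weight normal cherry-picking sequence into Corollary~\ref{c:one-non-binary}. The matching lower bound $h_{\rm n}(\cP)\ge s(\cP)$ is Corollary~\ref{c:normal-two}, obtained by applying Lemma~\ref{l:normal-two} to a normal network whose hybridisation number attains $h_{\rm n}(\cP)$. Combining the two inequalities gives $h_{\rm n}(\cP)=s(\cP)$, which completes the argument.

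There is no genuine technical obstacle here, as all the substantive work has been discharged in the cited lemmas and corollaries; the proof is purely a matter of chaining them together. The only point requiring care is the bookkeeping around definitions: the quantity $s(\cP)$ is defined only when a normal cherry-picking sequence exists, so the equality should be asserted only \emph{after} the biconditional has guaranteed that normal compatibility and the existence of such a sequence are equivalent. Once that equivalence is in hand, the two inequalities are valid under a common hypothesis and the equality follows without any further work.
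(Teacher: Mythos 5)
Your proposal is correct and matches the paper exactly: the paper gives no separate proof, stating only that the theorem ``summarises Lemmas~\ref{l:one-binary} and~\ref{l:normal-two}, and Corollaries~\ref{c:normal-one} and~\ref{c:normal-two},'' which is precisely the chaining you carry out (your use of Corollary~\ref{c:one-non-binary} in place of Lemma~\ref{l:one-binary} is immaterial, as the former is the non-binary extension of the latter). Your remark about asserting $h_{\rm n}(\cP)=s(\cP)$ only after the biconditional guarantees both quantities are defined is sound bookkeeping.
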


\section{Disparity between tree-child and normal networks}\label{sec:disparity}

In this section, we use the results of Section~\ref{sec:multi-normal} to show that, for all $n\ge 3$, there exists a pair of binary phylogenetic $X$-trees $\cT_1$ and $\cT_2$ on $n$ leaves such that $h_{\rm tc}(\cT_1, \cT_2)=1$, but $h_{\rm n}(\cT_1, \cT_2)=n-2$. In particular, we establish the following proposition.

\begin{proposition}
Let $\cT_1$ be the caterpillar $(\ell_1, \ell_2, \ldots, \ell_n)$ and let $\cT_2$ be the caterpillar $(\ell_2, \ell_3, \ldots, \ell_n, \ell_1)$, where $n\ge 3$. Then $h_{\rm tc}(\cT_1, \cT_2)=1$ and $h_{\rm n}(\cT_1, \cT_2)=n-2$.
\end{proposition}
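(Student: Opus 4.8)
The plan is to bound the two hybridisation numbers separately, converting the normal one into a statement about normal cherry-picking sequences via Theorem~\ref{t:normal-characterisation}. For the tree-child value, note first that $\cT_1\neq\cT_2$, so no single tree displays both and $h_{\rm tc}(\cT_1,\cT_2)\geq 1$. For the matching upper bound I would generalise the network $\cN$ of Figure~\ref{fig:caterpillars}: build the caterpillar spine on $\ell_2,\ell_3,\ldots,\ell_n$, add a single reticulation $r$ with child $\ell_1$, let one parent of $r$ subdivide the pendant edge at $\ell_2$ and let the other parent be a child of a new root $\rho$ whose second child is the top of the spine. This is a binary tree-child network with one reticulation; routing the embedding of $\ell_1$ through the low parent produces $\cT_1$ and through the high parent produces $\cT_2$. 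Hence $h_{\rm tc}(\cT_1,\cT_2)=1$.

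For $h_{\rm n}(\cT_1,\cT_2)\leq n-2$, by Theorem~\ref{t:two} the pair $\cT_1,\cT_2$ is displayed by a \emph{binary} normal network $\cN$, and since every binary normal network on $n$ leaves has at most $n-2$ reticulations~\cite{bickner12,mcdiarmid15}, we get $h(\cN)\leq n-2$ and so $h_{\rm n}(\cT_1,\cT_2)\leq n-2$.

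The substance is the lower bound $h_{\rm n}(\cT_1,\cT_2)\geq n-2$. By Theorem~\ref{t:normal-characterisation} it suffices to show that every normal cherry-picking sequence $\sigma$ for $\cP=\{\cT_1,\cT_2\}$ has $w(\sigma)\geq n-2$. Writing $c(\ell)$ for the number of pairs of $\sigma$ with first coordinate $\ell$, we have $w(\sigma)=\sum_{\ell\in X}(c(\ell)-1)$, as each leaf is a first coordinate at least once. The final leaf $x_{s+1}$ is never removed, so $c(x_{s+1})=1$; and any \emph{other} leaf $\ell$ with $c(\ell)=1$ must be removed from $\cT_1$ and from $\cT_2$ by one and the same pick, that is, by a pick whose pair is a cherry of \emph{both} current trees (a \textbf{shared pick}). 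Since exactly $n-1$ picks remove a leaf from $\cT_1$ and $n-1$ from $\cT_2$, if $b$ of them are shared then the number of essential pairs is $2(n-1)-b$, and counting the terminal pair gives $w(\sigma)\geq n-1-b$. Thus it is enough to prove $b\leq 1$.

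Bounding $b$ is where property~(N) is essential, and it is the hard part. The structural fact driving everything is that $\ell_1$ lies at the bottom of $\cT_1$ but at the top of $\cT_2$. Consequently a shared pick whose cherry \emph{contains} $\ell_1$ can be a cherry of $\cT_2$ only once $\cT_2$ has been reduced to its final two leaves, which happens at most once; so such shared picks number at most one. I would then rule out shared picks whose cherry \emph{avoids} $\ell_1$ altogether: such a pick can be a cherry of $\cT_1$ only after $\ell_1$ has already been removed from $\cT_1$ by some pick $(\ell_1,\ell_c)$, while $\ell_1$ must still be removed from the top of $\cT_2$ later by some pick $(\ell_1,\ell_t)$. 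Because successive cherries of the caterpillar $\cT_2$ always overlap in a single leaf, the intervening shared reductions splice into a chain of ordered pairs that starts at $(\ell_1,\ell_c)$, runs through the common spine $\ell_2,\ldots,\ell_n$ without reusing $\ell_1$, and closes at $(\ell_1,\ell_t)$ — precisely a shortcut sequence, contradicting~(N). Hence every shared pick contains $\ell_1$, so $b\leq 1$ and $w(\sigma)\geq n-2$. The delicate point, which I expect to be the main obstacle, is verifying that these reductions and the two occurrences of $\ell_1$ as a first coordinate can always be assembled, \emph{in their order of occurrence in $\sigma$}, into a subsequence satisfying all four conditions defining a shortcut sequence; the distinctness condition and the requirement that $\ell_1$ avoid the interior in particular force one to track how the lowest remaining leaf of $\cT_1$ migrates along the spine relative to the reduction of $\cT_2$.
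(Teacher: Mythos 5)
Your proposal is correct, and its engine is the same as the paper's: both routes reduce the lower bound to Theorem~\ref{t:normal-characterisation} and both derive the decisive contradiction by splicing the picks that reduce $\cT_2$ into a shortcut sequence joining two occurrences of $\ell_1$ as a first coordinate. The difference is bookkeeping. The paper first forces the removals of $\ell_1$ from $\cT_1$ and from $\cT_2$ to coincide (either $(\ell_1,-)$ is the terminal pair, or $(\ell_1,\ell_n)$ picks $\ell_1$ in both trees simultaneously) and then exhibits $2(n-2)$ pairwise distinct pairs: the picks $(\ell_j,\ell_1)$ in $\cT_1$ and the picks of $\ell_2,\ldots,\ell_{n-1}$ in $\cT_2$, whose second coordinates differ from $\ell_1$. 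You instead bound the number $b$ of shared picks by one and use $w(\sigma)\ge n-1-b$; given essentiality this count ($s+1\ge 2(n-1)-b+1$) is sound, and it is arguably a cleaner way to organise the same information, since it makes explicit why the paper's bound is tight (the one permitted shared pick is the simultaneous removal of $\ell_1$, or of $\ell_n$ when $x_{s+1}=\ell_1$).

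The delicate point you flag does close, and you should not regard it as a serious obstacle; it is essentially the paper's treatment of its ``$i\neq i'$'' case. Concretely: (i) by (TC), $\ell_c$ cannot have appeared as a first coordinate before $(\ell_1,\ell_c)$, so $\ell_c$ is still in the reduced $\cT_2$ at that moment and its pick from $\cT_2$ lies strictly between $(\ell_1,\ell_c)$ and $(\ell_1,\ell_t)$; (ii) a caterpillar has a unique cherry, so consecutive picks in $\cT_2$ share exactly one leaf, giving property (4); (iii) since $\ell_1$ is the top leaf of $\cT_2$, it lies in a cherry of the reduced $\cT_2$ only at the two-leaf stage, so every interior pair of the chain avoids $\ell_1$ (property (2)) --- the same observation justifies your implicit assumption that $\ell_1$ is still in $\cT_2$ at the shared pick, because removing $\ell_1$ from $\cT_2$ leaves a single leaf and precludes any later pick in $\cT_2$, and $\ell_1$ must indeed be removed later since it was already deleted from $\cT_1$, whence $x_{s+1}\neq\ell_1$; (iv) the interior pairs pick pairwise distinct leaves from $\cT_2$, giving property (3). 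Two small repairs are needed. First, your chain does not cover the degenerate case $\ell_c=\ell_t$; this is exactly the $m=2$ clause of the shortcut-sequence definition, as the pair $(\ell_1,\ell_c)$ then occurs twice. Second, in your tree-child construction the ``other parent'' of $r$ hung below a new root has out-degree one, which violates the degree conditions; make the root itself the second parent of $r$, as in the paper's network obtained from $\cT_2$ by subdividing the edges into $\ell_1$ and $\ell_2$ with $p_1$ and $p_2$ and adding $(p_2,p_1)$. Your upper bounds ($h_{\rm tc}=1$ via the explicit network, and $h_{\rm n}\le n-2$ via Theorem~\ref{t:two} together with the $n-2$ bound on reticulations of binary normal networks) match the paper's.
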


\begin{proof}
Since $\cT_1$ and $\cT_2$ are distinct, $h_{\rm tc}(\cT_1, \cT_2)\ge 1$. Let $\cN$ be the binary tree-child network obtained from $\cT_2$ by subdividing the edges directed into $\ell_2$ and $\ell_1$ with the new vertices $p_2$ and $p_1$ and adding the new edge $(p_2, p_1)$. Then $\cN$ displays $\cT_1$ and $\cT_2$, and has exactly one reticulation. Thus $h_{\rm tc}(\cT_1, \cT_2)=1$.

To show that $h_{\rm n}(\cT_1, \cT_2)=n-2$, let $\sigma$ be a normal cherry-picking sequence for $\cT_1$ and $\cT_2$. Without loss of generality, we may assume that every ordered pair in $\sigma$ is essential. Suppose that, for some $i\in \{2, 3, \ldots, n\}$, there is an ordered pair in $\sigma$ of the form $(\ell_1, \ell_i)$ that picks $\ell_1$ in $\cT_1$. If $\sigma$ picks $\ell_1$ in $\cT_2$ (strictly) before $\sigma$ picks $\ell_1$ in $\cT_1$, then $\sigma$ contains an ordered pair of the form $(\ell_1, \ell_{i'})$ corresponding to picking $\ell_1$ in $\cT_2$. But then, if $i\neq i'$, the normal cherry-picking sequence $\sigma$ has already picked $\ell_i$ in $\cT_2$ (otherwise $(\ell_1, \ell_{i'})$ is not yet a cherry), a contradiction as $\sigma$ satisfies (TC). In the case that $i=i'$, it follows that $(\ell_1, \ell_i)$ appears twice in $\sigma$, and so $\sigma$ is a shortcut sequence, another contradiction. If $\sigma$ picks $\ell_1$ in $\cT_1$ and $\cT_2$ simultaneously, then if $i\neq n$ we have $(\ell_n, \ell_i)$ as an ordered pair in $\sigma$ picking $\ell_n$ in $\cT_2$ and appearing before $(\ell_1, \ell_i)$, and $(\ell_n, \ell_i)$ as an ordered pair in $\sigma$ picking $\ell_n$ in $\cT_1$ and appearing after $(\ell_1, \ell_i)$. This verifies that $\sigma$ is a shortcut sequence.

It now follows that, unless $i=n$, we may assume that $\sigma$ picks $\ell_1$ in $\cT_2$ after it picks $\ell_1$ in $\cT_1$. Say there is an ordered pair in $\sigma$ of the form $(\ell_1, \ell_{i'})$ that picks $\ell_1$ in $\cT_2$. If $i=i'$, then $(\ell_1, \ell_i)$ appears twice in $\sigma$, a contradiction. If $i\neq i'$, then $\sigma$ picks $\ell_i$ before $\ell_{i'}$ in $\cT_2$, so the subsequence of $\sigma$ beginning with $(\ell_1, \ell_i)$ and ending with $(\ell_1, \ell_{i'})$ and whose intermediate ordered pairs consist of those pairs that pick leaves in $\cT_2$ starting with the ordered pair that picks $\ell_i$ in $\cT_2$ verifies that $\sigma$ is a shortcut sequence, a contradiction. Thus the ordered pair that picks $\ell_1$ in $\cT_2$ is of the form $(\ell_1, -)$.

We now deduce that either $(\ell_1, -)$ is the last ordered pair in $\sigma$ or, if this does not hold, then $(\ell_1, \ell_n)$ is the ordered pair in $\sigma$ that picks $\ell_1$ in $\cT_1$ and $\cT_2$ simultaneously. For the former to happen, $(\ell_1, -)$ picks $\ell_1$ in $\cT_1$ and $\cT_2$. For the latter to happen, the last two ordered pairs in $\sigma$ are $(\ell_1, \ell_n)$ and $(\ell_n, -)$. In both outcomes, $\sigma$ contains the subsequence
$$(\ell_2, \ell_1), (\ell_3, \ell_1), \ldots, (\ell_{n-1}, \ell_1)$$
corresponding to picking the leaves $\ell_2, \ell_3, \ldots, \ell_{n-1}$ in $\cT_1$. By comparison, the second coordinates of the ordered pairs in $\sigma$ that pick the leaves $\ell_2, \ell_3, \ldots, \ell_{n-1}$ in $\cT_2$ are distinct from those picking $\ell_2, \ell_3, \ldots, \ell_{n-1}$ in $\cT_1$. Hence $h_{\rm n}(\cT_1, \cT_2)\ge n-2$. But a binary normal network with $n$ leaves has at most $n-2$ reticulations~\cite{bickner12,mcdiarmid15}, and so $h_{\rm n}(\cT_1, \cT_2)=n-2$.
\end{proof}



\begin{thebibliography}{99}

\bibitem{baroni05}
M. Baroni, S. Gr\"unewald, V. Moulton, and C. Semple (2005). Bounding the number of hybridisation events for a consistent evolutionary history. Journal of Mathematical Biology, 51:171--182.

\bibitem{bernardi23}
G. Bernardini, L. van Iersel, E. Julien, and L. Stougie (2023). Reconstructing phylogenetic networks via cherry picking and machine learning. Algorithms for Molecular Biology, 18:13.

\bibitem{bickner12}
D. R. Bickner (2012). On normal networks. PhD thesis. Iowa State University.

\bibitem{blais21}
C. Blais and J. M. Archibald (2021). The past, present and future of the tree of life. Current Biology, 31:R311--R329.

\bibitem{bor16}
M. Bordewich and C. Semple (2016). Determining phylogenetic networks from inter-taxa distances. Journal of Mathematical Biology, 73:283--303.

\bibitem{cardona09}
G. Cardona, F. Rossell\'o, and G. Valiente (2009). Comparison of tree-child phylogenetic networks. IEEE/ACM Transactions on Computational Biology and Bioinformatics, 6:552--569.

\bibitem{doecker24}
J. D\"ocker, S. Linz, and C. Semple (2024). Hypercubes and Hamilton cycles of display sets of rooted phylogenetic networks. Advances in Applied Mathematics, 152:102595.

\bibitem{doecker20}
J. D\"ocker, S. Linz, and C. Semple (2020). Display sets of normal and tree-child networks. Electronic Journal of Combinatorics, 28, \#P1.8.

\bibitem{francis}
A. Francis (2021). ``Normal'' phylogenetic networks emerge as the leading class. arXiv:2107.10414.

\bibitem{francis21}
A. Francis, D. H. Huson, and M. Steel (2021). Normalising phylogenetic networks. Molecular Phylogenetics and Evolution, 136:107215. 

\bibitem{humphries13b}
P. J. Humphries, S. Linz, and C. Semple (2013). Cherry picking: A characterization of the temporal hybridization number for a set of phylogenies. Bulletin of Mathematical Biology, 75:1879--1890.

\bibitem{iersel22}
L. van Iersel, R. Janssen, M. Jones, Y. Murakami, and N. Zeh (2022). A practical fixed-parameter algorithm for constructing tree-child networks from multiple binary trees. Algorithmica, 84:917--960.
  
\bibitem{iersel10}
L. van Iersel, C. Semple, and M. Steel (2010). Locating a tree in a phylogenetic network. Information Processing Letters, 110:1037--1043.

\bibitem{janssen21}
R. Janssen and Y. Murakami (2021). On cherry-picking and network containment. Theoretical Computer Science, 856:121--150.

\bibitem{landry23}
K. Landry, A. Teodocio, M. Lafond, and O. Tremblay-Savard (2023). Defining phylogenetic network distances using cherry operations. IEEE/ACMTransactions on Computational Biology and Bioinformatics, 20:1654--1666.

\bibitem{landry}
K. Landry, O. Tremblay-Savard, and M. Lafond. A fixed-parameter tractable algorithm for finding agreement cherry-reduced subnetworks in level-$1$ orchard networks. Journal of Computational Biology, 31:360--379.

\bibitem{linz19}
S. Linz and C. Semple (2019). Attaching leaves and picking cherries to characterise the hybridisation number for a set of phylogenies. Advances in Applied Mathematics, 105:102--129.

\bibitem{mcdiarmid15}
C. McDiarmid, C. Semple, D. Welsh (2015). Counting phylogenetic networks. Annals of Combinatorics 19:205--224.

\bibitem{semple16}
C. Semple (2016). Phylogenetic networks with every embedded phylogenetic tree a base tree. Bulletin of Mathematical Biology, 78:132--137.

\bibitem{willson10}
S. J. Willson, S (2010). Properties of normal phylogenetic networks. Bulletin of  Mathematical Biology, 72:340--358.
  
\bibitem{willson11}
S. J. Willson (2011). Regular networks can be uniquely constructed from their trees. IEEE/ACM Transactions on Computational Biology and Bioinformatics, 8:785--796. 

\bibitem{willson12}
S. J. Willson (2012). Tree-average distances on certain phylogenetic networks have their weights uniquely determined. Algorithm for Molecular Biology 7:13.

\end{thebibliography}
\end{document}